\documentclass{amsart}
\usepackage{amsfonts, amssymb, amsthm, mathtools, extarrows, hyperref}
\usepackage[small,nohug,heads=LaTeX]{diagrams}
\diagramstyle[labelstyle=\scriptstyle]
\textwidth=6in
\textheight=9in
\hoffset=-0.375in
\voffset=-01in
\setcounter{section}{0}
\newtheorem{theorem}{Theorem}[section]
\newtheorem{lemma}[theorem]{Lemma}
\newtheorem{proposition}[theorem]{Proposition}

\theoremstyle{definition} 
 
\newtheorem{definition}[theorem]{Definition}
\newtheorem{example}[theorem]{Example}
\newtheorem{remark}[theorem]{Remark}
\usepackage{graphicx}
\usepackage{amsfonts, amssymb, mathtools, extarrows}
\usepackage[small,nohug,heads=LaTeX]{diagrams}
\diagramstyle[labelstyle=\scriptstyle]
\newcommand{\ints}{\mathbb{Z}}

\newcommand{\mfp}{\mathfrak{p}}
\newcommand{\mfm}{\mathfrak{m}}
\newcommand{\mfq}{\mathfrak{q}}
\newcommand{\im}{\text{im}}
\newcommand{\ckr}{\text{coker}}
\newcommand{\hm}{\text{Hom}}
\newcommand{\ds}{\oplus}

\newcommand{\bds}{\bigoplus}
\newcommand{\A}{\mathcal{A}}

\newcommand{\C}{\mathcal{C}}
\newcommand{\X}{\mathcal{X}}

\newcommand{\Y}{\mathcal{Y}}
\newcommand{\Om}{\Omega}
\newcommand{\ol}{\overline}
\newcommand{\ra}{\longrightarrow}
\newcommand{\proj}{\textbf{proj}\hspace{.05 cm}}
\newcommand{\Mod}{\textbf{mod}\hspace{.05 cm}}
\newcommand{\bMod}{\textbf{Mod}\hspace{.05 cm}}
\newcommand{\Add}{\textbf{add}_A\hspace{.05 cm}}
\newcommand{\Ad}{\textbf{add}_R\hspace{.05 cm}}
\newcommand{\aut}{\text{Aut}}
\newcommand{\End}{\text{End}}
\newcommand{\Eop}{E^{\text{op}}}
\newcommand{\ab}{\text{ab}}
\newcommand{\mcm}{\textbf{mcm}\hspace{.05 cm}}
\newcommand{\twohead}{\twoheadrightarrow}
\newcommand{\tail}{\rightarrowtail}

\newcommand{\vs}{\vspace{.125 cm}}
\newcommand{\diag}{\text{diag}}
\newcommand{\wt}{\widetilde}

\newcommand{\mds}{L_0\ds\cdots\ds L_t}
\newcommand{\ext}{\text{Ext}}
\newcommand{\stl}{\left\{}
\newcommand{\str}{\right\}}
\newcommand{\fp}{\text{fp}\hspace{.05 cm}}
\newcommand{\mfn}{\mathfrak n}
\newcommand{\vp}{\varphi} 
\newcommand{\sbe}{\subseteq}
\newcommand{\veps}{\varepsilon}
\author{Zachary Flores}
\address{Department of Mathematics, Colorado State University, Louis R. Weber Building, 841 Oval Drive, Fort Collins, CO 80523, USA}
\email{flores@math.colostate.edu}

\begin{document}

\title{$G$-groups of Cohen-Macaulay Rings with $n$-Cluster Tilting Objects}

\begin{abstract}
Let $(R, \mfm, k)$ denote a local Cohen-Macaulay ring such that the category of maximal Cohen-Macaulay $R$-modules $\mcm R$ contains an $n$-cluster tilting object $L$.  In this paper, we compute the Quillen $K$-group $G_1(R) := K_1(\Mod R)$ explicitly as a direct sum of a finitely generated free abelian group and an explicit quotient of $\aut_R(L)_\ab$ when $R$ is a $k$-algebra and $k$ is algebraically closed with characteristic not two.  Moreover, we compute $\aut_R(L)_\ab$ and $G_1(R)$ for certain hypersurface singularities.  
\end{abstract}

\maketitle


\section{Introduction}\label{intro}

Throughout this section $(R, \mfm, k)$ will always denote a local Noetherian ring that is Cohen-Macaulay.  Since the introduction of higher algebraic K-theory by Quillen there has been a significant effort to understand the structure of the $K$-groups $K_i(\A)$, for $\A$ an exact category.  Our particular interest is when $\A = \Mod R$, the category of finitely generated $R$-modules.  The groups $K_i(\Mod R)$ are denoted by $G_i(R)$.  They are, unsurprisingly, called the $G$-groups of $R$ (they are also called $K'$-groups in the literature and may be denoted by $K'_i(R)$).  In Section \ref{prelim}, we will discuss notation and various definitions of $K$-groups needed in the computation of $G_1(R)$.  

\vs

Let $\proj R$ be the subcategory of $\Mod R$ of finitely generated projective $R$-modules.  Now the inclusion $\proj R\hookrightarrow \Mod R$ induces a map of groups between $K_i(R) := K_i(\proj R)$ and $G_i(R)$.  It is of interest to understand the properties of this induced homomorphism.  In particular, when is this map an isomorphism?  This is precisely the case when $R$ is regular, following immediately from Quillen's Resolution Theorem (\cite{Q}, \S Theorem  3).  However, regular local rings are exceptionally well-behaved, so one cannot expect this behavior in general.  Suppose $i = 0$.  It is well-known $K_0(R)$ isomorphic to $\ints$ (see (\cite{J}, Theorem  1.3.11)), but what of $G_0(R)$?  If $R$ is regular, then $G_0(R) = \ints$.  However, if $R$ is not regular, but also has \textit{finite Cohen-Macaulay type} (that is, there are, up to isomorphism, finitely many indecomposable maximal Cohen-Macaulay $R$-modules) then the structure of $G_0(R)$ is elucidated in its entirety by the following.  

\begin{theorem}\label{k0mod}(\cite{Y}, Theorem  13.7)

Suppose there are $t$ non-free indecomposable maximal Cohen-Macaulay $R$-modules and denote by $\mathcal G$ the free abelian group on the set of isomorphism classes of indecomposable maximal Cohen-Macaulay $R$-modules.  The map $\mathcal G \ra G_0(R)$ given by $X\longmapsto  [X]$ is surjective and its kernel is generated by 
$$\left\{ X - X' - X''\hspace{.1 cm} \vert\hspace{.1 cm} \exists \hspace{.1 cm}\text{an Auslander-Reiten sequence}\hspace{.1 cm} 0\ra X'\ra X\ra X''\ra 0\right\}$$ 
And $G_0(R)\cong \ckr(\Upsilon)$, where $\Upsilon:  \ints^{\ds t}\ra \ints^{\ds(t+1)}$ is the Auslander-Reiten homomorphism.  
\end{theorem}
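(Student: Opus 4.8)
The plan is to replace $G_0(R)$ by the Grothendieck group of the exact category $\mcm R$ and then to present the latter using Auslander--Reiten theory. I take $R$ complete, so that Krull--Schmidt holds in $\mcm R$, and write $d = \Dim R$.

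The first step is to show that the inclusion $\mcm R \hook \Mod R$ induces an isomorphism $K_0(\mcm R) \cong G_0(R)$, which I would obtain from Quillen's Resolution Theorem. Truncating a minimal free resolution of an arbitrary finitely generated module $M$ yields an exact sequence $0 \to \Om^d M \to F_{d-1} \to \cdots \to F_0 \to M \to 0$ in which every $F_i$ is free and $\Om^d M$ is maximal Cohen--Macaulay, so each object of $\Mod R$ has a finite resolution by objects of $\mcm R$. The Depth Lemma supplies the other hypothesis: if $0 \to M' \to X \to Y \to 0$ is exact with $X, Y$ maximal Cohen--Macaulay then $\dep M' \geq \min\{\dep X, \dep Y + 1\} = d$, so $M'$ is maximal Cohen--Macaulay and $\mcm R$ is closed under kernels of admissible epimorphisms; as $\mcm R$ is also closed under extensions, the Resolution Theorem applies and gives the asserted isomorphism. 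Krull--Schmidt then presents $K_0(\mcm R)$ as the free abelian group $\mathfrak G$ on the indecomposables modulo all relations $X - X' - X''$ arising from short exact sequences $0 \to X' \to X \to X'' \to 0$ in $\mcm R$; in particular $X \mapsto [X]$ gives a surjection $\mathfrak G \twohead G_0(R)$.

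The crux is to cut this relation subgroup down to the one generated by the Auslander--Reiten relations. One containment is clear, since every Auslander--Reiten sequence is a short exact sequence. For the reverse, fix a short exact sequence $0 \to A \to B \to C \to 0$ with surjection $\pi \colon B \to C$; by additivity I may assume $C$ indecomposable. If $C \cong R$ then $\pi$ splits and the relation is trivial; otherwise $C$ is one of the $t$ non-free indecomposables and carries an Auslander--Reiten sequence $0 \to X' \to X \to C \to 0$ with right-hand map $g \colon X \to C$. Assuming $\pi$ does not split, the right almost split property of $g$ yields a factorization $\pi = g\phi$ with $\phi \colon B \to X$, and hence a morphism of short exact sequences that is the identity on $C$. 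The standard difference construction then produces a single short exact sequence $0 \to A \to X' \ds B \to X \to 0$, and computing classes gives $[B] - [A] - [C] = \big([X] - [X'] - [C]\big) + \big([X' \ds B] - [A] - [X]\big)$. Thus the original relation equals the Auslander--Reiten relation of $C$ plus the relation of this residual sequence, and induction completes the argument.

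To conclude, take the $t+1$ indecomposable isomorphism classes---the $t$ non-free modules together with $R$---as a basis of $\ints^{\ds(t+1)}$ and one generator of $\ints^{\ds t}$ for each of the $t$ Auslander--Reiten sequences; the homomorphism sending the generator of $0 \to X' \to X \to C \to 0$ to $X - X' - C$ is precisely $\Upsilon$. The previous two paragraphs then identify the kernel of $\mathfrak G \twohead G_0(R)$ with $\im \Upsilon$, giving $G_0(R) \cong \ckr(\Upsilon)$. The step I expect to be the genuine obstacle is making the induction of the third paragraph well-founded: the residual sequence has quotient $X$, the middle of the Auslander--Reiten sequence, which is in general no smaller than $C$, and since maximal Cohen--Macaulay modules have infinite length one cannot induct on $\ell(C)$ directly. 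When the Auslander--Reiten quiver is acyclic the indecomposable summands of $X$ are strict predecessors of $C$ and an induction along the quiver closes the argument; in general one replaces $\ell(C)$ by the lengths of the finite-length modules $\ext^1_R(C, A)$ governing the extensions and argues by Noetherian induction. What makes this available is precisely that finite Cohen--Macaulay type forces $R$ to be an isolated singularity, by Auslander's theorem, whence $\ext^1_R(M, N)$ has finite length for all $M, N \in \mcm R$ and the Auslander--Reiten sequences exist at all. This is the technical heart on which the presentation rests; the remaining assembly into $\ckr(\Upsilon)$ is formal.
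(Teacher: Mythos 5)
A preliminary remark: the paper does not prove this theorem at all --- it quotes it from Yoshino (\cite{Y}, Theorem 13.7) --- so your proposal must be measured against the standard proof that Yoshino gives (going back to Auslander--Reiten and Butler). Your skeleton coincides with that proof: the Resolution Theorem identification $K_0(\mcm R)\cong G_0(R)$ via syzygies and the depth lemma, Krull--Schmidt for surjectivity, and the conversion of an arbitrary relation into an Auslander--Reiten relation plus a residual relation via the right-almost-split factorization $\pi = g\phi$ and the exact sequence $0\ra A\ra X'\ds B\ra X\ra 0$ are all exactly as in that argument.

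The gap is the one you flagged yourself, and neither of your proposed repairs closes it. Acyclicity is unavailable: Auslander--Reiten quivers of Cohen--Macaulay local rings of finite type contain cycles and even loops, and this very paper uses such an example --- for the curve singularity $A_{2n}$, $R = k[[t^2, t^{2n+1}]]$, the Auslander--Reiten sequence ending in $R_n$ is $0\ra R_n\ra R_{n-1}\ds R_n\ra R_n\ra 0$, so the end term $C$ is a direct summand of the middle term $X$. The same example defeats Noetherian induction on $\ell(\ext^1_R(C,A))$: your residual sequence is a class in $\ext^1_R(X,A)$, and whenever $C$ is a summand of $X$, say $X = C\ds X''$, one has $\ext^1_R(X,A)\cong \ext^1_R(C,A)\ds\ext^1_R(X'',A)$, so your measure can only increase. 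As it stands, the reduction step can be iterated forever and proves nothing.

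What makes the induction well-founded --- and this is the real content of Yoshino's proof --- is to measure a non-split sequence $\sigma:  0\ra A\ra B\stackrel{\pi}{\ra} C\ra 0$ by the length of the finitely presented functor
$$F_\sigma = \ckr\left(\hm_R(\bullet, B)|_{\mcm R}\ra \hm_R(\bullet, C)|_{\mcm R}\right)$$
in the functor category $\Mod(\mcm R)$, equivalently the length of the corresponding module over the Auslander algebra $\End_R(M)$. One has $F_\sigma = 0$ if and only if $\sigma$ splits; $F_\sigma$ has finite length because $R$ has finite type and is an isolated singularity (each value $F_\sigma(Y)$ embeds in $\ext^1_R(Y, A)$, a finite-length module, and there are only finitely many indecomposables $Y$) --- so isolated singularity enters here, not where you placed it. Your own construction then supplies the decisive computation: the morphism of sequences coming from $\pi = g\phi$ induces an exact sequence $0\ra F_{\sigma'}\ra F_\sigma\ra S_C\ra 0$, where $\sigma'$ is the residual sequence and $S_C = \hm_R(\bullet, C)/\textbf{rad}_{\mcm R}(\bullet, C)$ is the simple functor at $C$ (compare Definition \ref{narhom} with $n = 1$: the Auslander--Reiten sequence is precisely a minimal projective resolution of $S_C$). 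Hence $\ell(F_{\sigma'}) = \ell(F_\sigma) - 1$, and induction on $\ell(F_\sigma)$ terminates. With this one replacement your argument becomes the standard proof.
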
 

The immense usefulness of Theorem \ref{k0mod} lies in the fact that the computation of $G_0(R)$ has been reduced to linear algebra, as the Auslander-Reiten homomorphism can be readily computed from the Auslander-Reiten quiver.  This quickly leads to the explicit computation of $G_0(R)$ for all simple singularities of finite type (see \cite{Y}, Proposition  13.10).  One can quickly see that these groups are often not $\ints$.   

\vs

Moving up one rung on the $K$-theory ladder, it is well-known that $K_1(R) := K_1(\proj R)\cong R^*$ (see (\cite{VS}, Example 1.6)).  However, the structure of $G_1(R)$ was not known for some time until the work of H. Holm in \cite{H} and V. Navkal in \cite{v2}.  In the former, computing $G_1(R)$ was carried out over an $R$ which has finite Cohen-Macaulay type and it was found that $G_1(R)$ could be computed as an explicit quotient of $\aut_R(M)_\ab$, with $M$ an additive generator for the category  maximal Cohen-Macaulay $R$-modules, $\mcm R$ (noting such an $M$ exists if and only if $R$ has finite Cohen-Macaulay type).  The latter produced the following.  

\begin{theorem}(\cite{v2}, Theorem  1.3)\label{viraj}

Assume that $R$ is Henselian and the category $\mcm R$ has an $n$-cluster tilting object $L$.  Let $ I$ be the set of isomorphism classes of indecomposable summands of $L$ and set $I_0 = I \backslash\left\{R \right\}$.  Then there is a long exact sequence 
$$\cdots \ra \displaystyle\bds_{L'\in I_0} G_i(\kappa_{L'})\ra G_i(\Lambda)\ra G_i(R)\ra  \displaystyle\bds_{L'\in I_0} G_{i-1}(\kappa_{L'})\ra\cdots$$
Where
$$\Lambda = \End_R(L)^{\text{op}}\hspace{.5 cm}\text{and}\hspace{.5 cm} \kappa_{L'} = \End_R(L')^{op}/\text{rad}(\End_R(L')^{op})$$

Moreover, $\kappa_{L'}$ is always a division ring, and when $R/\mfm = k$ is algebraically closed, $\kappa_{L'} = k$.  

\vs

The long exact sequence ends in presentation 
$$\displaystyle\bds_{L'\in I_0} G_0(\kappa_{L'})\ra G_0(\Lambda)\ra G_0(R)\ra 0 $$ 

of $G_0(R)$.  Since $G_0(\Lambda) = \ints^I$ and $\displaystyle\bds_{L'\in I_0} G_0(\kappa_{L'}) = \ints^{I_0}$, the presentation of $G_0(R)$ given above is precisely the one given in Theorem \ref{k0mod} when $L$ is an additive generator of $\mcm R$.

\end{theorem}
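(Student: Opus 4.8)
The strategy I would pursue is to exhibit the long exact sequence as the Quillen localization sequence attached to a Serre subcategory of $\Mod \Lambda$. We may assume $L$ is basic and write $L = R\ds M_1\ds\cdots\ds M_t$, so that $\mathfrak{I}_0 = \{M_1,\dots,M_t\}$. Since $R$ is Henselian, $\End_R(N)$ is local for every indecomposable $N$, so $\Lambda = \Eop$ (where $E = \End_R(L)$) is a semiperfect $R$-algebra, module-finite over $R$, with a complete set of orthogonal primitive idempotents $e_0,e_1,\dots,e_t$; let $e := e_0$ be the one cutting out the free summand $R$. The plan has three moving parts: a Serre-quotient description of $\Mod R$ inside $\Mod \Lambda$, a d\'evissage computation of the $K$-theory of the kernel of that quotient, and Quillen's localization theorem to assemble them.

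First I would set $\A = \{N\in\Mod\Lambda : eN = 0\}$, a Serre subcategory of $\Mod \Lambda$ which is nothing but $\Mod \ol\Lambda$ for the quotient ring $\ol\Lambda = \Lambda/\Lambda e\Lambda$. Because $e$ is the idempotent of the free summand, $\Lambda e\Lambda$ consists of the endomorphisms factoring through a free module, so $\ol\Lambda$ is the stable endomorphism ring $\underline{\End}_R(L)$. Next, using $e\Lambda e\cong R$, the exact functor $N\mapsto eN$ from $\Mod \Lambda$ to $\Mod R$ realizes $\Mod R$ as the Serre quotient $\Mod\Lambda/\A$; this is the standard quotient-by-an-idempotent equivalence, and I would verify its hypotheses (exactness, essential surjectivity via $X\mapsto \Lambda e\otimes_R X$, and the universal property of the quotient). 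Granting this, Quillen's localization theorem (\cite{Q}) produces
$$ \cdots \ra K_i(\A)\ra G_i(\Lambda)\ra G_i(R)\ra K_{i-1}(\A)\ra \cdots, $$
so that everything reduces to computing $K_i(\A) = G_i(\ol\Lambda)$.

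For that I would prove $\ol\Lambda$ is Artinian. The existence of an $n$-cluster tilting object in $\mcm R$ forces $R$ to have (at worst) an isolated singularity, and over a Henselian isolated singularity the stable endomorphism ring $\underline{\End}_R(L)$ has finite length over $R$; hence $\ol\Lambda$ is Artinian. Its radical is nilpotent, so d\'evissage gives $G_i(\ol\Lambda)\cong G_i(\ol\Lambda/\text{rad}(\ol\Lambda))$, and $\ol\Lambda/\text{rad}(\ol\Lambda)$ is obtained from $\Lambda/\text{rad}(\Lambda)\cong k\times\prod_{M\in\mathfrak{I}_0}\kappa_M$ by deleting the factor $k = \kappa_R$ belonging to $e$; thus $\ol\Lambda/\text{rad}(\ol\Lambda)\cong\prod_{M\in\mathfrak{I}_0}\kappa_M$. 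Morita invariance then yields $G_i(\ol\Lambda)\cong\bds_{M\in\mathfrak{I}_0}K_i(\kappa_M) = \bds_{M\in\mathfrak{I}_0}G_i(\kappa_M)$, the outer terms of the asserted sequence. That each $\kappa_M$ is a division ring is Schur's lemma applied to the local ring $\End_R(M)$, and when $k$ is algebraically closed the Nullstellensatz forces $\kappa_M = k$.

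Finally, for the $G_0$ tail I would invoke Iyama's higher Auslander theory to the effect that $\Lambda$ has finite global dimension, whence $G_0(\Lambda) = K_0(\Lambda)$ is free on the indecomposable projectives, giving $G_0(\Lambda) = \ints^{\mathfrak{I}}$, while $G_0(\kappa_M) = \ints$ yields $\ints^{\mathfrak{I}_0}$; identifying the map $\ints^{\mathfrak{I}_0}\ra\ints^{\mathfrak{I}}$ (computed by taking finite projective $\Lambda$-resolutions of the simples $S_M$, which encode the Auslander-Reiten sequences) with the Auslander-Reiten homomorphism $\Upsilon$ recovers Theorem \ref{k0mod} in the finite-type case. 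I expect the main obstacle to be not the formal localization machinery but the verification that $\ol\Lambda$ is Artinian --- that is, extracting the isolated-singularity consequence of $n$-cluster tilting and the finite length of stable Homs --- together with the bookkeeping that matches the primitive idempotents of $\Lambda$ with the summands of $L$ and confirms $\Mod\Lambda/\A\simeq\Mod R$.
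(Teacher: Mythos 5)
The first thing to note is that the paper contains no proof of this statement: Theorem \ref{viraj} is quoted from Navkal (\cite{V}, Theorem 1.1, and the thesis \cite{v2}), and the paper only touches its inner workings later, in Section \ref{struck1}, where it borrows from (\cite{v2}, Section 7.2) the map $\alpha$ used to construct the long exact sequence. So your proposal can only be measured against the cited source, and measured that way it is correct in outline and is essentially the same argument: Navkal's proof is a Quillen localization sequence plus d\'evissage, performed in the functor category $\Mod\C$ for $\C = \Ad L$ --- the Serre subcategory is the class of finitely presented functors vanishing on $R$, the quotient is identified with $\Mod R$ by evaluation at $R$, and d\'evissage computes the $K$-theory of the kernel as $\bds_{M\in\mathfrak I_0} G_i(\kappa_M)$ via the simple functors attached to the non-free indecomposable summands of $L$. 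Your version transports all of this through the equivalence $e_L$ of Proposition \ref{emequiv}: functors vanishing on $R$ become modules killed by $e$, and the kernel becomes $\Mod\overline{\Lambda}$ with $\overline{\Lambda} = \Lambda/\Lambda e\Lambda$ the stable endomorphism ring. That is a largely linguistic difference, though your phrasing does make the d\'evissage step transparent: finite length of $\underline{\End}_R(L)$ over an isolated singularity, nilpotent radical, semisimple quotient $\prod_{M\in\mathfrak I_0}\kappa_M$.

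Two details deserve repair. First, the parenthetical claim that the existence of an $n$-cluster tilting object forces $R$ to be an isolated singularity is Auslander's theorem for $n=1$, but should not be asserted for general $n$ without citation; you do not need it, since isolated singularity is the paper's standing assumption (d) (and a hypothesis in the cited source), and it is exactly what makes $\underline{\End}_R(L)$ an $R$-module of finite length. Second, and more substantively, your justification of $G_0(\Lambda) = \ints^{\mathfrak I}$ appeals to finite global dimension of $\Lambda$ as though it were automatic from Iyama's theory; it is not. Iyama's bound (Theorem \ref{dn}) requires $d\leq n$, and this paper explicitly treats finite global dimension of $\Lambda$ as an \emph{additional} hypothesis (Theorem \ref{thmg1}) and proves $G_0(\Lambda)\cong\ints^{\ds(t+1)}$ only under that hypothesis (Lemma \ref{g0lambda}). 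For the theorem's closing sentence --- the comparison with Theorem \ref{k0mod}, which concerns the finite-type case $n=1$ --- finite global dimension is supplied by Auslander and Leuschke (\cite{G}, Theorem 2.1), so your argument does close there; for general $n$ this step of your write-up has a gap, and you should either impose $d\leq n$ (or finite global dimension) or supply a different argument for the $G_0$ statement.
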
 

The definition of an $n$-cluster tilting object is technical and we refer the reader to Definition \ref{ncluster} and Section \ref{sec4} for examples.  We show in Section \ref{struck1} that utilizing Theorem \ref{viraj} and techniques from \cite{H}, we can generalize and simplify the results \cite{H} on the structure of $G_1(R)$.  Keeping notation as in Theorem \ref{viraj}), our contribution in this direction is the following.  

\begin{theorem}\label{thmg1}

Let $k$ be an algebraically closed field of characteristic not $2$ and $R$ a Henselian $k$-algebra that admits a dualizing module and is also an isolated singularity.  If $\mcm R$ admits an $n$-cluster tilting object $L$ such that $\End_R(L)^\text{op}$ has finite global dimension, then there is a subgroup $\Xi$ of $\aut_R(L)_\ab$, described explicitly in Definition \ref{defsha}, and a free abelian group $\mathcal H$  such that 
$$G_1(R)\cong \mathcal H \ds  \aut_R(L)_\ab/\Xi $$
\end{theorem}
 
The utility of Theorem \ref{thmg1} is that the computation of $G_1(R)$ for some hypersurface singularities becomes tractable, as well as removing the necessity of the injectivity of the Auslander-Reiten homomorphism and the need for $R$ to have finite Cohen-Macaulay type, as required in \cite{H}.  In fact, with the long exact sequence of \cite{v2} and the machinery of \cite{H}, the proof is quite elementary.  However, before proving Theorem \ref{thmg1} in Section \ref{thmg1}, we collect the necessary details on $n$-cluster tilting objects, noncommutative algebra and functor categories in Section \ref{prelim}.  

\vs

Of course, in order to utilize Theorem \ref{thmg1}, one might want to know when $\mcm R$ admits an $n$-cluster tilting object.  This is discussed in Section \ref{sec4}.  

\vs 

The goal of explicitly computing $G_1(R)$ for specific $R$ would not be possible if we could not compute $\aut_R(L)_\ab$.  We expend some energy in Section \ref{abgroup} calculating $\aut_R(L)_\ab$ for several concrete examples.  This section and the next form the technical heart of our work.  

\vs 

Utilizing the results of Section \ref{abgroup}, we are able to explicitly compute $G_1(R)$ for several hypersurface rings in Section \ref{examples}.  See Examples \ref{ex1}, \ref{ex2}, \ref{ex3} and Proposition \ref{thmg2} for details.  

\vs

In Section \ref{discussion}, we discuss the similarities our computations share and make a conjecture.  

\vs 

We now fix notation.  We always use $A$ to denote an associative ring with identity that is not necessarily commutative; $\Mod A$ will be the category of finitely generated left $A$-modules; and $\proj A$ will be the category of finitely generated projective left $A$-modules.   

\vs

We will use the following setup:  $(R, \mfm, k)$ always denotes a commutative local Cohen-Macaulay ring such that

\vs

(a)  $R$ is Henselian.  

\vs

(b)  $R$ admits a dualizing module. 

\vs

(c)  $\mcm R$ admits an $n$-cluster tilting object.  

\vs

(d)  $R$ is an isolated singularity.  

\vs

The assumption of (a) give us that \textit{any} maximal Cohen-Macaulay module can be written uniquely as a direct sum of finitely many indecomposable maximal Cohen-Macaulay modules (see (\cite{GR}, Theorem  1.8 and Exercise 1.19)).  In fact, all of the rings for which we compute $G_1(R)$ are complete, so they already satisfy (a) (see (\cite{GR}, Corollary 1.9)).  The assumption of (b) is a standard technical assumption in representation theory of Cohen-Macaulay rings.  Currently, the assumption (c) is very much a technical black box, but we will see it is indispensable; see Definition \ref{ncluster}.  The assumption in (d) is necessary to make use of the theory of $n$-cluster tilting objects.  When necessary, we will assume that $R$ is a $k$-algebra and $\text{char}(k)\neq 2$, but we do not use this as a blanket assumption.      


\section{Preliminaries}\label{prelim}

\subsection{Some Definitions of K-groups}\label{kgroupdef}

We begin first by discussing the classical definition lower $K$-groups.  

\begin{definition} 
The \textbf{classical $K_0$-group} of $A$, denoted by $K^C_0(A)$, is defined as the Grothendieck group of the category $\proj A$.  More explicitly, choose an isomorphism class for each $P\in\proj A$ and let $X$ be the free abelian group on these isomorphism classes.  Then $K^C_0(A)$ is the quotient of $X$ by the subgroup  of $X$ generated by $\left\{ [P] - [P'] - [P''] : 0\ra P'\ra P\ra P''\ra 0\hspace{.1 cm}\text{exact}\right\}$. 

\vs

The \textbf{classical $K_1$-group} of $A$, denoted by $K^C_1(A)$, is defined as the abelianization of the infinite general linear group over $A$.  That is, using the obvious embeddings $GL_n(A)\hookrightarrow GL_{n+1}(A)$, we can form the \textit{infinite general linear group} $GL(A) := \bigcup_{n\geq 1}GL_n(A)$.  Thus $K^C_1(A)$ is $GL(A)_\ab$. 
\end{definition} 

\vs
Of principal importance in defining $K$-groups for our purposes is the following notion.  

\begin{definition} 

An \textbf{exact category} $\mathcal{Y}$ is an additive category together with a distinguished class of sequences $Y'\tail Y \twohead Y''$ called \textit{coinflations} with a fully faithful additive functor $F$ from $\mathcal{Y}$ into an abelian category $\X$ such that 

\vs

(a)  $Y'\tail Y\twohead Y''$ is a conflation in $\mathcal{Y}$ if and only if $0\ra F(Y')\ra F(Y)\ra F(Y')\ra 0$ is exact in $\X$.  

\vs

(b)  If $0\ra F(Y')\ra X\ra F(Y'')\ra 0$ is exact in $\X$, then $X\cong F(Y)$ for some $Y$ in $\mathcal{Y}$.  That is, $\mathcal{Y}$ is closed under extensions in $\X$.

\end{definition}

We note any abelian category is an exact category. Moreover, $\proj A$ is an exact category, where the conflations are taken to be the sequences that are exact in $\Mod A$.  Note that $\proj A$ is an exact category which is not abelian.  

\vs

We will need the following notions as they pertain to exact categories.  

\begin{definition}\label{deftriv}

$\mathcal{Y}$ denotes an exact category. 

\vs 

(a)  We will always work under the assumption that the objects of $\Y $ form a set.  In this regard, we say that $\mathcal{Y}$ is \textbf{skeletally small}.    

\vs

(b)  We say $\Y $ is a \textbf{semisimple exact category} if every conflation splits.  The prototypical example of a semisimple exact category is $\proj A$.  

\vs

(c)  We write $\Y_0 $ to denote $\Y $ viewed as an exact category in which the coinflations $Y'\tail Y\twohead Y''$ are such that the corresponding exact sequence in the abelian category $\X$ is split exact.  We call this the \textbf{trivial exact structure} for $\Y $.  
\end{definition} 

The definition of Bass's $K_1$ functor rests squarely upon the following notion.  

\begin{definition}\label{loop}

Let $\mathcal{Y}$ be any category.  Its \textbf{loop category} $\Omega\mathcal{Y}$ is the category whose objects are pairs $(Y, \alpha)$, $Y$ an object of $\mathcal{Y}$ and $\alpha\in \aut_{\mathcal{Y}}( Y)$.  A morphism in $\Omega\mathcal{Y}$ between two objects $(Y, \alpha)$ and $(Y', \alpha')$ is a commutative diagram in $\mathcal{Y}$ 

\begin{diagram}
Y &\rTo^f &Y'\\
\dTo_\cong^{\alpha} & &\dTo^\cong_{\alpha'}\\ 
Y &\rTo_f & Y'
\end{diagram}

\end{definition} 

\begin{remark} 

Let $\mathcal{Y}$ be a skeletally small exact category.  Its loop category $\Om\mathcal{Y}$ is also skeletally small and it is not hard to see that $\Om\mathcal{Y}$ inherits an exact structure such that $(Y', \alpha')\tail (Y, \alpha)\twohead (Y'', \alpha'')$ is a coinflation in $\Om\mathcal{Y}$ if and only if $Y'\tail Y \twohead Y''$ is a coinflation in $\mathcal{Y}$.

\end{remark} 

\begin{definition} 

Let $\mathcal{Y}$ be a skeletally small exact category and $\Om\mathcal{Y}$ be its loop category, so that $\Om\mathcal{Y}$ is also skeletally small and exact.  We define \textbf{Bass's $K_1$-group of $\mathcal{Y}$}, denoted by $K^B_1(\mathcal{Y})$, to be the Grothendieck group of $\Om\mathcal{Y}$ modulo the subgroup  generated by the following elements 
$$(Y, \alpha) + (Y, \beta) - (Y, \alpha\beta)$$
For $(Y, \alpha)$ in $\Omega\mathcal{Y}$ we denote its image in $K^B_1(\mathcal{Y})$ as $[Y, \alpha]$.  

\end{definition}

\begin{remark} 

(a) (\cite{H}, 3.4) We note for $Y\in\mathcal{Y}$, we have 
$$[Y, 1_Y] + [Y, 1_Y] = [Y, 1_Y1_Y] = [Y, 1_Y]$$
Hence $[Y, 1_Y]$ is the identity element of $K_1^B(\mathcal{Y})$.  

\vs

(b)  Unexpectedly, $K^B_1$ is a functor from the category of skeletally small exact categories to abelian groups.  Indeed, for a morphism $F$ (which is necessarily an exact functor) between $\mathcal Y$ and another skeletally small exact category, we have $K^B_1(F)([Y, \alpha]) = [F(Y), F(\alpha)]$.  

\end{remark} 

\begin{remark}\label{bassiso}(\cite{J}, Theorem  3.1.7) 

There is an isomorphism 
$$\eta_A:  K^C_1(A)\stackrel{\cong}{\ra} K_1^B(\proj A)$$ 
The isomorphism $\eta_A$ is such that $\xi\in GL_n(A)$ is mapped to the class $[A^n, \xi]\in K_1^B(\proj A)$, where elements of $A^n$ are viewed as row vectors and $\xi$ acts by multiplication on the right.  

\end{remark}  

\begin{definition} 

Let $\mathcal{Y}$ be a skeletally small exact category.  The \textbf{$i$th Quillen $K$-group of $\mathcal Y$}, denoted by $K^Q_i(\mathcal Y)$, is defined to be the abelian group $ \pi_{i+1}(BQ\mathcal{Y}, 0)$, where $Q\mathcal{Y}$ is  \textit{Quillen's Q-construction}; $BQ\mathcal{Y}$ is the classifying space of $Q\mathcal{Y}$; $0$ is a fixed zero object; and $\pi_{i+1}$ denotes the taking of a homotopy group.  

\end{definition} 

By (\cite{Q},Section 2, Theorem  1) there is a natural isomorphism of between the Grothendieck group functor and $K^Q_0$ (as functors on the category of skeletally small exact categories).  Moreover, $K^Q_1(\proj A)$ is naturally isomorphic to $ K^C_1(A)$ (see (\cite{VS}, Corollary 2.6 and Theorem  5.1)).  Quillen's definition of higher $K$-theory is stunningly elegant, but does not often lend itself to performing computations with ease.  The definition of Bass's functor $K^B_1$ will be more suited for our computational needs and, we will want to exploit this in the sequel.  As in (\cite{H}, 3.6), we will make strong use of the following theorem.  

\begin{theorem}\label{gersher}
There exists a natural transformation $\zeta:  K_1^B\ra K_1^Q$, which we call the Gersten-Sherman transformation, of functors on the category of skeletally small exact categories such that $\zeta_{\mathcal{Y}}:  K_1^B(\mathcal{Y})\ra K^Q_1(\mathcal{Y})$ is an isomorphism for every semisimple exact category $\Y$.  In particular, $\zeta_{\proj A}:  K_1^B(\proj A)\ra K_1^Q(\proj A)$ is an isomorphism for every ring $A$.  
\end{theorem}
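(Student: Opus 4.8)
The result is quoted from the literature, being originally due to Gersten and Sherman; the plan is to reconstruct the two assertions separately --- the existence of the natural transformation $\zeta$ and its isomorphism property on semisimple categories --- while taking Quillen's foundational comparisons as given rather than reproving them.

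First I would construct $\zeta_\Y$ on generators. Recall $K_1^Q(\Y) = \pi_2(BQ\Y, 0)$. Given an object $(Y,\alpha)$ of the loop category $\Om(\Y)$, the automorphism $\alpha\in\aut_\Y(Y)$ together with the identity on $Y$ determines two admissible morphisms in $Q\Y$ sharing source and target; the pair of associated $1$-simplices bounds a canonical $2$-cell in $BQ\Y$, and the resulting based map $S^2\to BQ\Y$ gives a well-defined class in $\pi_2(BQ\Y,0)$. I would set $\zeta_\Y([Y,\alpha])$ equal to this class. This is essentially the standard identification of an automorphism of an object with a loop in the loop space $\Om BQ\Y$, whose homotopy groups compute the $K$-groups of $\Y$.

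Second I would verify that $\zeta_\Y$ is a well-defined homomorphism out of $K_1^B(\Y)$. The essential point is that the defining relation $[Y,\alpha] + [Y,\beta] = [Y,\alpha\beta]$ of $K_1^B$ must be sent to an identity in $\pi_2$: the sum on the left should correspond to the concatenation of the two $2$-cells, and the homotopy realizing $\alpha$ followed by $\beta$ as $\alpha\beta$ is precisely the cell built from the composable pair in $Q\Y$. One then checks compatibility with conflations, so that $\zeta_\Y$ descends from the Grothendieck group of $\Om(\Y)$ modulo the multiplicativity relations. Naturality of $\zeta$ in $\Y$ is afterwards formal: an exact functor $F$ induces compatible functors on loop categories and on $Q$-constructions, and since $\zeta$ is assembled only from this functorial data, one obtains $K_1^Q(F)\circ\zeta_\Y = \zeta_{\Y'}\circ K_1^B(F)$.

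Finally I would prove that $\zeta_\Y$ is an isomorphism when $\Y$ is semisimple. Here every conflation splits, so $\Y$ coincides with its trivial exact structure $\Y_0$, and I would invoke Quillen's computation of the $K$-theory of a split exact category to identify $\pi_2(BQ\Y,0)$ with the abelianized automorphism data of $\Y$, which is exactly the group presented by $K_1^B(\Y)$. For the special case $\Y = \proj A$ this is Quillen's identification $K_1^Q(\proj A)\cong GL(A)_\ab$, which matches $K_1^B(\proj A)$ through Remark \ref{bassiso}; the general semisimple case follows by applying the same argument to the automorphism groups of the indecomposable objects. I expect the main obstacle to be the construction step: producing the class in $\pi_2(BQ\Y,0)$ from an automorphism, and matching the Bass multiplicativity relation with concatenation of $2$-cells, requires genuine homotopy-theoretic bookkeeping inside the $Q$-construction, and it is this comparison --- not the final semisimple computation, which is a direct appeal to Quillen --- that carries the weight of the theorem.
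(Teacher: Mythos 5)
The first thing to note is that the paper itself offers no proof of this statement: Theorem \ref{gersher} is quoted from the literature, with existence sketched by Gersten and the details (including the semisimple isomorphism) supplied by Sherman, and both this paper and \cite{H} use it as a black box. So the standard of comparison is Sherman's argument, and your reconstruction, while it follows the right overall outline (construct $\zeta$, verify the Bass relations and naturality, then compare on split exact categories), has a genuine gap at its central step. The proposed construction of the class in $\pi_2(BQ\Y,0)$ would fail: a morphism $0\ra Y$ in $Q\Y$ is an isomorphism class of spans $(Z\twohead 0,\ Z\tail Y)$, and the spans given by $Y\stackrel{1_Y}{\tail} Y$ and $Y\stackrel{\alpha}{\tail} Y$ are isomorphic as diagrams (via $\alpha^{-1}$), hence are \emph{equal} as morphisms of $Q\Y$. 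So ``$\alpha$ together with the identity'' does not produce a pair of distinct parallel $1$-simplices --- automorphisms are invisible in the $1$-skeleton of $BQ\Y$, which is exactly why this theorem is delicate. Moreover, even granted two distinct parallel edges, a single $2$-cell is a map from $D^2$, not $S^2$; an element of $\pi_2$ requires two discs glued along a common boundary circle. The actual construction takes the endomorphism $q(\alpha)$ of the object $Y$ in $Q\Y$ given by the span $(Y\stackrel{1}{\twohead} Y,\ Y\stackrel{\alpha}{\tail} Y)$, which \emph{is} a nondegenerate loop at the vertex $Y$ when $\alpha\neq 1_Y$, together with the two distinct morphisms $i_Y, j_Y\colon 0\ra Y$ given by the zero subobject and the full subobject. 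One checks $q(\alpha)\circ i_Y = i_Y$ and $q(\alpha)\circ j_Y = j_Y$, so the corresponding two $2$-simplices of the nerve give two null-homotopies of the loop $q(\alpha)$, and the class of $(Y,\alpha)$ is the sphere obtained by gluing these two discs; well-definedness, the relation $[Y,\alpha]+[Y,\beta]=[Y,\alpha\beta]$, additivity over conflations, and naturality all then have to be verified at this $2$-dimensional level.

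The semisimple step is also not correct as described. The reduction ``to the automorphism groups of the indecomposable objects'' presupposes a Krull--Schmidt decomposition that a general semisimple exact category need not have, and in any case $K_1$ is not computed summandwise: automorphisms of $Y^{\ds n}$ mix the summands, and what appears is the stabilized group $\mathrm{colim}\,\aut_{\Y}(Y)_{\ab}$ over direct-sum translation. Sherman's proof that $\zeta_\Y$ is an isomorphism for split exact categories goes through the group-completion (``$+=Q$'') comparison for the symmetric monoidal category of isomorphisms --- the same machinery that yields $K_1^Q(\proj A)\cong GL(A)_\ab$ --- applied to all of $\Y$ at once, not through indecomposables. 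In short: your architecture matches Gersten--Sherman, but both the construction of the $\pi_2$-class and the semisimple comparison, as written, would fail, and these are precisely the two points where the content of the theorem lives.
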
 

The name for $\zeta$ was introduced in \cite{H} for the following:  The existence of $\zeta$ was initially sketched by Gersten in (\cite{ger}, sect. 5) and the details were later filled in by Sherman (\cite{sher}, sect. 4), whom also proved $\zeta_\mathcal{Y}$ is an isomorphism for every semisimple exact category.    

\subsection{$n$-Auslander-Reiten Theory}\label{nartt}
We want to discuss generalizations of Auslander-Reiten theory, following \cite{i}.  To do so, we will require some precise categorical language.  Here $\Y$ denotes any exact category.  

\begin{definition} 
Write $\bMod \Y$ for the category of additive contravariant functors $\Y \ra \textbf{Ab}$, with $\textbf{Ab}$ the category of abelian groups.  The morphisms in $\bMod\Y $ are natural transformations between functors with kernels and cokernels computed pointwise.  An easy check shows that $\bMod\Y$ is abelian.  We write $(\bullet, Y) $ to denote the additive contravariant functor $\hm_\Y(\bullet, Y)$.  We say $F\in\bMod\Y$ is \textbf{finitely presented} if there is an exact sequence
$$(\bullet, Y)\ra (\bullet, Y')\ra F\ra 0$$
in $\bMod\Y$.  We write $\Mod\Y$ for the subcategory of finitely presented functors.  
\end{definition} 

For a ring $A$, let $\bMod A$ denote the category of all left $A$-modules and denote the subcategory of finitely presented left $A$-modules by $\Mod_{\fp} A$.  Fix a left $A$-module $N$ and denote by $E$ its endomorphism ring $\End_A(N)$.  Then $N$ has a left $E$-module structure that is compatible with its left $A$-module structure such that for $e\in E$ and $n\in N$, $e\cdot n  = e(n)$.  Denote by $\Add N$ the category of $A$-modules that consists of all direct summands of finite direct sums of $N$.  For $F\in \bMod(\Add N)$, the aforementioned left $E$-module structure on $N$ induces a  left-$\Eop$-module structure on the abelian group $FN$ such that $e\cdot z = (Fe)(z)$ for $e\in \Eop$ and $z\in FN$.  We use these facts for the following proposition, which will be essential in the proof of Theorem \ref{thmg1}.  

\begin{proposition} \label{emequiv}  (\cite{H}, Proposition  6.2)

There are quasi-inverse equivalences of abelian categories 
\begin{diagram}
\bMod(\Add N)&&&\pile{\rTo^{e_N}\\ \quad\simeq\quad \\ \lTo_{f_N}} &&& \bMod\Eop
\end{diagram}
Where the functors $e_N$ and $f_N$ are defined as follows:  $e_N(F) = FN$ (evaluation) and $f_N(Z) = Z\otimes_E \hm_A(\bullet, N)\vert_{\Add N}$ (functorification).  Also, these quasi-inverse equivalences restrict to equivalences between categories of finitely presented objects 
\begin{diagram}
\Mod(\Add N) &&& \pile{\rTo^{e_N}\\ \quad\simeq\quad \\ \lTo_{f_N}} &&& \Mod_\fp\Eop
\end{diagram}
\end{proposition}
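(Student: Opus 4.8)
The plan is to recognize the representable functor $P := (\bullet, N) = \hm_A(\bullet, N)\vert_{\Add N}$ as a finitely generated projective generator of $\bMod(\Add N)$ whose endomorphism ring is $E$, and then to exhibit $e_N$ and $f_N$ as an adjoint pair whose unit and counit are isomorphisms. First I would invoke the Yoneda lemma: for every $F\in\bMod(\Add N)$ there is a natural isomorphism $\hm_{\bMod(\Add N)}(P, F)\cong FN = e_N(F)$, so $e_N\cong\hm(P,-)$. Since kernels and cokernels in $\bMod(\Add N)$ are computed pointwise, evaluation at $N$ is exact and preserves coproducts; in particular $P$ is projective. As every object of $\Add N$ is a summand of some $N^m$ with $(\bullet, N^m)\cong P^m$, the object $P$ is also a generator, and it is compact because $\hm(P,-)$ commutes with coproducts. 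A further Yoneda computation identifies $\End_{\bMod(\Add N)}(P)\cong E$, and here I would carefully check that the $E$-action on $\hm(P,F)$ by precomposition matches the left $\Eop$-structure $e\cdot z = (Fe)(z)$ placed on $FN$.

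Next I would establish the adjunction $f_N\dashv e_N$. Viewing $\hm_A(\bullet, N)\vert_{\Add N}$ as an $E$--$\Add N$ bimodule object, the tensor--hom adjunction in the functor category gives, for $Z\in\bMod\Eop$ and $F\in\bMod(\Add N)$,
$$\hm_{\bMod(\Add N)}(f_N Z, F)\cong \hm_E(Z, \hm_{\bMod(\Add N)}(P, F))\cong \hm_{\Eop}(Z, e_N F),$$
so $f_N$ is left adjoint to $e_N$. As a left adjoint, $f_N$ preserves all colimits and is right exact, while $e_N$ is exact and coproduct-preserving; hence both composites $f_N e_N$ and $e_N f_N$ are right exact and preserve coproducts.

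Then I would check that the unit $u_Z : Z\ra e_N f_N Z$ and the counit $c_F : f_N e_N F\ra F$ are isomorphisms. On the free module $Z = E$ one computes $f_N(E) = E\otimes_E \hm_A(\bullet, N)\vert_{\Add N}\cong P$ and $e_N(P)\cong E$, so $u_E$ is an isomorphism; coproduct-preservation extends this to all free modules, and a free presentation $E^{(J)}\ra E^{(I)}\ra Z\ra 0$ together with right exactness extends it to every $Z$. Dually, $c_F$ is an isomorphism when $F = (\bullet, X)$ is representable, via the evaluation isomorphism $\hm_A(N, X)\otimes_E \hm_A(\bullet, N)\vert_{\Add N}\cong(\bullet, X)$, and a presentation of an arbitrary $F$ by coproducts of representables extends this to all $F$. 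This yields the equivalence $e_N\simeq f_N^{-1}$.

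Finally, for the restriction to finitely presented objects I would use that $e_N$ sends representables to finitely generated projective $\Eop$-modules: indeed $e_N(\bullet, X) = \hm_A(N, X)$ is a summand of $\hm_A(N, N^m) = E^m$, so $\hm_A(N,-)$ restricts to an equivalence $\Add N\simeq\proj\Eop$. Since a functor is finitely presented exactly when it is a cokernel of a map between representables, the exact equivalence $e_N$ converts such a presentation into a finite projective presentation of $e_N F$, and conversely $f_N$ converts a finite projective presentation of $Z$ into a presentation of $f_N Z$ by representables; hence $e_N$ and $f_N$ restrict to quasi-inverse equivalences between $\Mod(\Add N)$ and $\Mod_\fp\Eop$. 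I expect the main obstacle to be bookkeeping rather than ideas: keeping the left/right and opposite-ring conventions consistent across the identification $\End(P)\cong E$, the bimodule structure on $\hm_A(\bullet, N)$, and the tensor--hom adjunction, so that the target is genuinely $\bMod\Eop$ and not $\bMod E$. One could instead quote the Gabriel--Mitchell recognition theorem, that a cocomplete abelian category with a compact projective generator $P$ is equivalent to $\bMod\End(P)^{\text{op}}$ via $\hm(P,-)$ with quasi-inverse $-\otimes_{\End(P)}P$, and identify this quasi-inverse with $f_N$; but the adjunction argument keeps the proof self-contained.
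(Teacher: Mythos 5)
Your argument is correct, but there is nothing in the paper to compare it with: the paper does not prove this proposition at all — it imports it wholesale from Holm (\cite{H}, Proposition 6.2), which is why the statement carries that citation and is followed by no proof environment. Your write-up is the standard self-contained route (Morita theory for functor categories, going back to Auslander): identify $P = (\bullet, N)$ as a compact projective generator with $\End(P)\cong E$ via Yoneda, identify $e_N$ with $\hm(P,\bullet)$, obtain the adjunction $f_N\dashv e_N$ from tensor--hom, verify the unit on $E$ and the counit on representables, and propagate by right-exactness and preservation of coproducts; the restriction to finitely presented objects then falls out because the exact functor $e_N$ carries representables to finitely generated projective $\Eop$-modules and the right exact functor $f_N$ carries $E$ to $(\bullet, N)$. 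The two points that genuinely need care are the ones you flagged: (i) the generator claim should be justified either by the standard fact that every object of $\bMod(\Add N)$ is a quotient of a coproduct of representables, each of which is a summand of a power of $P$, or by the criterion that for projective $P$ it suffices that $\hm(P,F)=0$ forces $F=0$, which follows from additivity of $F$ since every object of $\Add N$ is a summand of some $N^{m}$; and (ii) the ring-convention bookkeeping — the Yoneda embedding is covariant, so $\End(P)\cong E$ as rings, and precomposition makes $\hm(P,F)$ a right $E$-module, i.e.\ a left $\Eop$-module, which matches the structure $e\cdot z = (Fe)(z)$ fixed before the statement. With those two points made explicit, your proof is complete and could stand in for the missing one.
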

\begin{definition}

Let $\X$ be an additive category and $\C$ a subcategory of $\X$.  We call $\C$ \textbf{contravariantly finite}, if for any $X\in \X$ there is a morphism $f:  C\ra X$ with $C\in \C$ such that 
$$(\bullet, \C)\stackrel{\bullet f}{\ra} (\bullet, X)\ra 0$$
is exact (where $\bullet f$ is the map induced by $f$).  Such an $f$ is called a \textbf{right}-$\C$-\textbf{approximation} of $X$.  We dually define a \textbf{covariantly finite} subcategory and a \textbf{left}-$\C$-\textbf{approximation}.    A contarvariantly and covariantly finite subcategory is called \textbf{functorially finite}.  

\end{definition} 

At long last, we are able to define an $n$-cluster tilting object.    

\begin{definition}\label{ncluster}
Let $\Y$  be an exact category with enough projectives.  For objects $X, Y$ in $\Y$ we write $X \perp_n Y$ if $\ext^i_\Y(X, Y) = 0$ for $0 < i \leq n$.  For an exact subcategory $\C \subset\Y$, we put 
$$\C^{\perp_n} = \stl X\in \Y : Y \perp_n X\hspace{.1 cm}\text{for all}\hspace{.1 cm} Y\in \C \str$$
$$^{\perp_n}C= \stl X\in \Y :  X \perp_n Y \hspace{.1 cm}\text{for all}\hspace{.1 cm} Y\in \C \str$$
We call $\C$ an $n$-\textbf{cluster-tilting} subcategory of $\Y$ if it is functorially finite and $\C = \C^{\perp_{n-1}} = ^{\perp_{n-1}}C$.  An object $L$ of $\Y$ is called $n$-\textbf{cluster-tilting} if $\text{add}_\Y(L)$ is an $n$-cluster tilting subcategory.   

\end{definition} 

From the definition of $n$-cluster tilting, if $\mcm R$ admits an $n$-cluster tilting object $L$, then $R$ is necessarily a direct summand of $L$.  While the definition of $n$-cluster tilting is quite a bit to digest at once, there are concrete examples of $n$-cluster tilting objects over familiar rings and we refer the reader to Section \ref{sec4} for several examples.  

\vs 

When $R$ has finite Cohen-Macaulay type, we have the classical notion of an \textit{Auslander-Reiten sequence} or \textit{almost-split sequence}.  When $\mcm R$ has an $n$-cluster tilting subcategory, we have the following generalization.  

\begin{definition}\label{narhom}

If $\C\subset \mcm R$ is an $n$-cluster tilting subcategory, given $X\in \mcm R$ not free and indecomposable, an exact sequence 
$$0\longrightarrow C_n\stackrel{f_n}{\longrightarrow}\cdots\stackrel{f_1}{\longrightarrow} C_0\stackrel{f_0}{\longrightarrow} X\longrightarrow 0$$ 
with $C_0,\ldots, C_n\in \C$ such that 
$$0\longrightarrow (\bullet, C_{n-1})\stackrel{\bullet f_n}{\longrightarrow}\cdots \stackrel{\bullet f_1}{\longrightarrow} (\bullet ,C_0)\stackrel{\bullet f_0}{\longrightarrow} (\bullet, X)\longrightarrow 0$$ 
is a minimal projective resolution of $(\bullet, X)/\textbf{rad}_{\mcm R}(\bullet, X)$ in $\Mod \C$ is called an $n$-\textbf{Auslander-Reiten sequence} (or an $n$-\textbf{almost-split sequence}).  

\vs 

Here $\textbf{rad}_{\mcm R}(\bullet, X)$ is such that 
$$\textbf{rad}_{\mcm R}(Y, X)  = \stl f\in \hm_R(Y, X) : fg\in \text{rad}(\End_R(Y))\hspace{.1 cm}\text{for all}\hspace{.1 cm} g\in \hm_R(Y, X)\str$$ 
If $\C\subset \mcm R$ is an $n$-cluster tilting subcategory, then $n$-Auslander-Reiten sequences exist by (\cite{i2}, Theorem 3.31).  

\end{definition}

\subsection{Endomorphism Rings and $K$-groups}\label{endokgroup}

By our blanket assumptions on $R$, there is a unique decomposition of the $n$-cluster tilting object $L = L_0^{\ds l_0}\ds \cdots\ds L_t^{\ds l_t}$, such that $L_i\in \mcm R$ is indecomposable and $l_i > 0$ and the $L_i$ are pairwise non-isomorphic.  In this section, we will assume that $l_i = 1$.  For if we write $L_{\text{red}} = L_0\ds\cdots\ds L_t$, then $\Ad L = \Ad L_{\text{red}}$.  Thus $L$ is an $n$-cluster tilting object for $\mcm R$ if and only if $L_{\text{red}}$ is.  Moreover, we will see in Section \ref{struck1}, that in the context of Theorem \ref{thmg1}, the choice of $L_\text{red}$ over $L$ is immaterial.  Write $\C = \Ad L$.  The following construction is from (\cite{H}, Construction 2.6).  

\vs 

If $L'\in \C$, we can write $L' = L_0^{\ds m_0}\ds \cdots\ds L_t^{\ds m_t}$ for uniquely determined $m_0,\ldots, m_t \geq 0$.  Set $q = q(L') = \max\left\{m_0,\ldots, m_t\right\}$ and $v_j = v_j(L') = q - m_j$.  Notice that $q$ is the smallest integer such that $L'$ is a direct summand of $L^{\ds q}$.  Now form the $R$-module $L'' = L_0^{\ds v_0}\ds\cdots\ds L_t^{\ds v_t}$ and let $\psi:  L'\ds L'' \ra L^{\ds q}$ be the $R$-linear isomorphism that takes the element 
$$((\underline{x}_0,\ldots, \underline{x}_t), (\underline{y}_0,\ldots, \underline{y}_t)\in L'\ds L'' = (L_0^{\ds m_0}\cdots\ds L_t^{\ds m_t}) \ds (L_0^{\ds v_0}\ds \cdots \ds L_t^{\ds v_t})$$
where $\underline{x}_j\in L_j^{\ds m_j}$ and $\underline{y}_j\in L_j^{\ds v_j}$, to the element 
$$((z_{01},\ldots, z_{t1}),\ldots, (z_{0q},\ldots, z_{tq}))\in L^{\ds q} = (L_0\ds\cdots\ds L_t)^{\ds q}$$ 
with $z_{j1},\ldots, z_{jq}\in L_j$ given by 
$$(z_{j1},\ldots, z_{jq}) = (\underline{x}_j, \underline{y}_j)\in L_j^{\ds q}  = L_j^{\ds(m_j + v_j)}$$ 
Now for $\alpha\in\aut_R(L')$, we define $\wt{\alpha}$ to be the automorphism on $L^{\ds q}$ given by $\psi(\alpha \ds  1_{L''})\psi^{-1}$.  Note that $\wt{\alpha} = (\wt{\alpha_{ij}})$, with $\wt{\alpha_{ij}}$ uniquely determined endomorphisms of $L$.  In particular, $\wt{\alpha}\in\mathbb M_q(\End_R L)$.  As in \cite{H}, we refer to this construction as the \textbf{tilde construction}.  

\begin{remark}\label{situations}

We note a special case of the tilde construction.  Keep notation as above.  Suppose $\alpha = a 1_{L'}$ with $a\in R^*$.  If $L' = L_{i_1}^{\ds q}\ds\cdots\ds L_{i_h}^{\ds q}$ with $0\leq i_1 < i_2 < \cdots < i_h\leq t$.  Then $\wt\alpha:  L^{\ds q} \ra L^{\ds q}$ is the automorphism given by $e1_{L^{\ds q}}$ with $e\in \aut_R(L)$ given by 
$$\diag(1_{L_0},\ldots, a1_{L_{i_1}},\ldots, a1_{L_{i_h}},\ldots, 1_{L_t})$$
Hence, $(\wt{a1_{L'}})^{-1} = \wt{a^{-1}1_{L'}}$. 

\end{remark}

As we will often be working explicitly with highly noncommutative rings, we need to discuss important ideas at the intersection of noncommutative algebra and $K$-theory.  Let $J(A)$ be the Jacobson radical of the not necessarily commutative ring $A$.  Recall that $A$ is said to be \textit{semilocal} if $A/J(A)$ is semisimple.  That is, every left $A/J(A)$-module has the property that each of its submodules is a direct summand of $A/J(A)$.  In the case that $A$ is commutative, this is equivalent to $A$ having only finitely many maximal ideals (\cite{Lam}, Proposition  20.2).  Of great importance to us is the following situation:  If $A$ is a commutative semilocal Noetherian ring and $N$ is a nonzero finitely generated $A$-module, then $\End_A(N)$ is semilocal in the preceding sense (\cite{H}, Lemma 5.1). We will see how the following remark utilizes this small but essential fact in the proof of Theorem \ref{thmg1}.  

\begin{remark}\label{vas} (\cite{H}, Paragraph 5.2)

For arbitrary $A$, denote the composition of the following group homomorphisms 
$$A^* = GL_1(A) \hookrightarrow GL(A)\twohead GL(A)_\ab = K^C_1(A)$$ 
by $\vartheta_A$.  Since $K^C_1(A)$ is abelian, there is an induced map $\theta_A:  A^*_\ab\ra K^C_1(A)$.  If $A$ is semilocal, then (\cite{B}, V\S9 Theorem  9.1) shows that $\vartheta_A$ is surjective, hence so is $\theta_A$.  When $A$ contains a field $k$ with $\text{char}(k)\neq 2$, a result of Vaserstein (\cite{L}, Theorem  2)  shows that $\theta_A$ is an isomorphism.  In particular, if $R$ is a $k$-algebra, $\text{char}(k)\neq 2$ and $M$ is a finitely generated $R$-module with $E = \End_R(M)$, then $\theta_E$ and $\theta_{\Eop}$ are isomorphisms.  

\vs

Suppose now $A$ is a commutative semilocal ring, so that the commutator subgroup $[A^*, A^*]$, is trivial, hence $\theta_A:  A^*\ra K^C_1(A)$ is surjective.  In (\cite{H}, Remark. 5.4), if $\theta_A$ is an isomorphism, an explicit inverse to $\theta_A$ is constructed:  The determinant homomorphisms $\text{det}_n:  GL_n(A)\ra A^*$ induce a homomorphism $\text{det}_A:  K^C_1(A)\ra A^*$ (since each $\text{det}_n$ is trivial on commutators in $GL(A)$) which satisfies $\text{det}_A\theta_A = 1_{A^*}$, so that $\theta_A^{-1} = \text{det}_A$.  

\end{remark}

Using Remark \ref{vas} as motivation, the following definition is made in \cite{H}.   

\begin{definition}\label{gendet} 

Let $A$ be a ring for which the map $\theta_A:  A^*_\ab\ra K^C_1(A)$ is an isomorphism.  The inverse $\theta_A^{-1}$ is denoted by $\text{det}_A$ and is is called the \textbf{generalized determinant}.  

\end{definition} 

The following proposition makes use of the tilde construction and will be useful in proving Theorem \ref{thmg1}.  We note it is essentially proven in \cite{H}, where it is a synthesis of (\cite{H}, Lemma 6.5) and the proof of (\cite{H}, Proposition 8.8).  We also note that the assumptions in (\cite{H}, Proposition 8.8) are that $R$ has finite Cohen-Macaulay type.  However, we note that under our assumptions, the portion of the proof we are referencing (\cite{H}, equation (8.8.1)) still holds.

\begin{proposition}\label{tildeprop} 
Keeping our general assumptions, suppose in addition that $R$ is an algebra over its residue field $k$ and the characteristic of $k$ is not two.  Let $L_0,\ldots, L_t\in \mcm R$ and $L$ be their direct sum.  Set $\Lambda = \End_R(L)^\text{op}$.  Let $\mathcal C_0 = \Ad (L)$ be equipped with the trivial exact structure.  If $\Lambda$ has finite global dimension, then there is an isomorphism of groups 
$$\tau:  K_1^B(\mathcal C_0) \ra \aut_R(L)_\ab$$ 
such that for any $L'\in \mathcal C_0$ and any $\alpha\in\aut_R(L')$, $\tau([L', \alpha]) = \det_{\Lambda^\text{op}}(\widetilde \alpha)$.  

\end{proposition} 

\begin{remark}\label{rho}(\cite{H}, Observation  8.9)

Let $A$ be any commutative Noetherian local ring and $\eta_A$ be the isomorphism from Remark \ref{bassiso} and $\theta_A:  A^*\ra K^C_1(A)$ be the induced map from Remark \ref{vas}.  Then $\theta_A$ is an isomorphism by (\cite{VS}, Example 1.6).  Thus the composition $\rho_A = \eta_A\theta_A:  A^*\ra K^B_1(\proj A)$ is an isomorphism such that $a\in A^*$ is mapped to $[A, a1_A]$.  

\end{remark} 

We now combine the the above preliminaries with the tilde construction to define the subgroup $\Xi$ of $\aut_R(L)_\ab$ in Theorem \ref{thmg1}.  

\begin{definition}\label{defsha} 
Recall that we are assuming that $\mcm R$ has $n$-cluster-tilting object of the form $L = L_0\ds\cdots\ds L_t$.  We assume that $L_0 = R$ and that for $j > 0$, the $L_j$ are non-free pairwise non-isomorphic and indecompsable objects in $\mcm R$.  Suppose also that $R$ is a $k$-algebra, $\text{char}(k)  \neq 2$ and $k$ is algebraically closed.  If $\mcm R$ has an $n$-cluster tilting object $L$ such that $\Lambda := \End_R(L)^\text{op}$ has finite global dimension, we define a subgroup $\Xi$ of $\aut_R(L)_\ab$ as follows: For $j > 0$, let 
$$0\ra C_n^j\ra \cdots \ra C_0^j\ra L_j\ra 0$$ 
be the $n$-Auslander-Reiten sequence ending in $L_j$ (see Definition \ref{narhom}).  By Remark \ref{vas}, $\theta_{\Lambda^\text{op}}:  \aut_R(L)_\ab \ra K^C_1(\Lambda^\text{op})$ is an isomorphism with inverse given by $\det_{\Lambda^\text{op}}$.  Then $\Xi$ is the subgroup generated by the elements given by 
$$\widetilde{a1_{L_j}}\prod_{i=1}^{n+1}\text{det}_{\Lambda^{\text{op}}}(\widetilde{a1_{C^j_{i-1}}})^{(-1)^i}$$ 
where $a$ runs over all elements of $k^*$ and $j = 1,\ldots, t$.  

\end{definition}


\section{The Structure of $G_1(R)$}\label{struck1}

In this section, unadorned $K$-groups are the Quillen $K$-groups.  Our goal of this section is to prove Theorem \ref{thmg1}.  We always assume that $\mcm R$ has an $n$-cluster tilting object $L = L_0^{l_0}\ds \cdots \ds L_t^{\ds l_t}$, with $L_0 = R$, and $L_1,\ldots, L_t$ non-free, non-isomorphic indecomposable maximal Cohen-Macaulay $R$-modules such that $\Lambda := \End_R(L)^{\text{op}}$ has finite global dimension.  In addition to our blanket assumptions, we assume that $k$ is algebraically closed of characteristic not two and $R$ is a $k$-algebra.  We begin with an easy reduction. 

\begin{lemma}\label{morita}

Set $L_\text{red} = L_0 \ds \cdots\ds L_t$.  If $\Lambda_{\text{red}} = \End_R(L_{\text{red}})^{\text{op}}$, then $\Lambda$ and $\Lambda_{\text{red}}$ are Morita-equivalent.  In particular, $G_i(\Lambda)\cong G_i(\Lambda_{\text{red}})$ for all $i\geq 0$.  

\end{lemma} 

\begin{proof} 

The desired Morita equivalence is from (\cite{dfi2}, Lemma 2.2).  Thus the categories of left $\Lambda$ and $\Lambda_{\text{red}}$ modules are equivalent, hence there is an equivalence of exact categories between $\Mod \Lambda $ and $\Mod \Lambda_{\text{red}}$.  It is well-known this yields an isomorphism in $G$-theory, hence $G_i(\Lambda)\cong G_i(\Lambda_{\text{red}})$ for all $i\geq 0$.  

\end{proof}

It is easy to see $\Ad L = \Ad L_{\text{red}}$.  Moreover, since $\Lambda$ has finite global dimension, the Morita equivalence of Lemma \ref{morita} gives that $\Lambda_\text{red}$ also has has finite global dimension.  Since $\Lambda$ has finite global dimension and is a semilocal algebra over a field of characteristic not two, by Quillen's Resolution Theorem (\cite{Q}, \S Theorem  3), (\cite{VS}, Corollary 2.6 and Theorem  5.1), and (\cite{L}, Theorem  2) we have isomorphisms 
$$G_1(\Lambda) \cong K_1(\Lambda) \cong K^C_1(\Lambda) = \Lambda^*_\ab = \aut_R(L)_\ab $$ 
As noted above, $\Lambda_\text{red}$ has finite global dimension, hence the same arguments apply, so that the above remarks and Lemma \ref{morita} give 
$$\aut_R(L_\text{red})_\ab = (\Lambda_\text{red})^*_\ab \cong G_1(\Lambda_\text{red}) \cong G_1(\Lambda) \cong \Lambda^*_\ab = \aut_R(L)_\ab$$ 
Thus may safely assume that the $n$-cluster tilting object $L$ for $\mcm R$ has the form $L_0\ds\cdots \ds L_t$, where the $L_i$ are non-isomorphic indecomposable maximal Cohen-Macaulay.  Henceforth, we always use $\Lambda$ to denote $\End_R(L)^{\text{op}}$ with $L = L_0\ds\cdots \ds L_t$, $L_0 = R$ and for $j > 0$, the $L_j$ are non-free, non-isomorphic indecomposable objects in $\mcm R$.  

\vs 

Since $k$ is algebraically closed, $\kappa_{L_j} =\End_R(L_j)^\text{op}/\text{rad}(\End_R(L_j)^\text{op}) = k$ for all $j$ (this is essentially Nakayamma's lemma).  By Theorem \ref{viraj}, there is an exact sequence of abelian groups  
\begin{equation*} G_1(k)^{\ds t} \stackrel{\gamma}{\ra} G_1(\Lambda )\ra G_1(R)\ra G_0(k)^{\ds t} \ra  G_0(\Lambda)\ra G_0(R) \ra 0\end{equation*}
By Theorem \ref{viraj}, $G_0(\Lambda) = \ints^{\ds (t+1)}$.  Moreover, is well-known that $G_0(k) = \ints$.  In particular, the above exact sequence becomes 
\begin{equation}G_1(k)^{\ds t}  \stackrel{\gamma}{\ra} G_1(\Lambda )\ra G_1(R)\ra \mathcal H \ra 0\tag{$\star$}\end{equation}
where $\mathcal H$ is the kernel of a map $\ints^{\ds t}\ra \ints^{\ds (t+1)}$.  Now $\mathcal H$ is free, being the subgroup of a free group, hence the exactness of $(\star)$ gives an isomorphism 
$$G_1(R)\cong \ckr(\gamma)\ds \mathcal H$$ 
Thus to prove Theorem \ref{thmg1}, that is, in order to calculate $\Xi$, we need to explicitly describe the map $\gamma$.  In this direction, we first define $\mathcal C_0$ to be the category $\mathcal C := \Ad L = \Ad(L_0, \ldots, L_t)$ equipped with trivial exact structure.  As we are assuming $\Lambda$ has finite global dimension, (\cite{H}, Lemma 6.5) gives an isomorphism $K_1(\mathcal C_0)\cong K_1(\Mod \mathcal C)$ that is induced by the exact Yoneda functor $y_L:  \mathcal C_0 \ra \Mod \mathcal C$, where $y_L(X) = \hm_R(\bullet, X)|_{\mathcal C}$.  Since $\Lambda$ is left Noetherian, Proposition \ref{emequiv} gives that the evaluation functor $e_L:  \Mod\mathcal C\ra \Mod \Lambda$ is an equivalence, hence induces an isomorphism $K_1(\Mod\mathcal C)\cong K_1(\Lambda)$.  Moreover, $\Lambda$ has finite global dimension, so that Quillen's Resolution Theorem (\cite{Q}, \S Theorem  3) yields that the inclusion functor $\proj \Lambda \ra \Mod \Lambda$ induces an isomorphism $K_1(\Lambda) \cong G_1(\Lambda)$.  Hence there is a map $\alpha:  G_1(k)^{\ds t}\ra K_1(\mathcal C_0)$ such that the diagram
 \begin{diagram}[tight,width=3em,height=3em]
& & K_1(\mathcal C_0)\\
&\ruTo(2, 2)^{\alpha} & \dTo_\cong \\
G_1(k)^{\ds t}  & \rTo_{\gamma} & G_1(\Lambda)
\end{diagram}
commutes.  This gives $\ckr(\gamma)\cong\ckr(\alpha)$.  Thus to prove Theorem \ref{thmg1}, it suffices to compute $\ckr(\alpha)$.  In fact, $\alpha$ is computed in the discussion of (\cite{v2}, Section 7.2).  The details will be useful and we recall them.  Now $L = \mds$, with $L_0 = R$ and $L_1,\ldots, L_t$ are the non-free indecomposable and non-isomorphic summands of $L$.  We set $I = \stl L_0,\ldots, L_t\str$ and $I_0 =  I\backslash \stl R\str$.  For $j > 0$ let 
$$0\ra C_n^j\ra \cdots \ra C_0^j\ra L_j\ra 0$$ 
be the $n$-Auslander-Reiten sequence ending in $L_j$ (see Definition \ref{narhom}).  Denote by $k_j$ the object of $\ds_{I_0} \Mod k$ which is $k$ in the $L_j$-coordinate and $0$ in the others.  We remark that to define a $k$-linear functor out of $\ds_{I_0} \Mod k$, one needs only to specify the image of each object $k_j$.  We define $k$-linear functors 
$$a_i:  \bds_{I_0} \Mod k \ra \C_0\hspace{1 cm} (0\leq i\leq n+1)$$ 
by 
$$\left\{
     \begin{array}{lr}
       a_i(k_j)  = C^j_{i-1} & (1\leq i\leq n+1)\\
       a_0(k_j)  = L_j    
     \end{array}
   \right.$$
It is shown in (\cite{v2}, Section 7.2) that $\alpha = \sum_{i=0}^{n+1}(-1)^iK_1(a_i)$.  We have the following.  
\begin{proposition}\label{mainprop} 
If $\Xi$ is the subgroup of $\Lambda^*_\ab$ from Definition \ref{defsha}, there is an isomorphism $\ckr(\alpha)\cong \Lambda^*_\ab/\Xi$.  
 \end{proposition} 
Now Proposition \ref{mainprop} implies Theorem \ref{thmg1}, so the proof of Proposition \ref{mainprop} will conclude this section.  

\begin{proof}

Since the morphisms $a_i:  \bds_{I_0} \Mod k\ra \C_0$ are functors on exact categories, they also define maps $K^B_1(a_i):  K^B_1(\bds_{I_0} \Mod k) \ra K^B_1(\C_0) $ on the Bass $K_1$-groups.  Now $|I_0| = t$, so that $K^B_1(\bds_{I_0} \Mod k) = \bds_{I_0} K_1^B(\Mod k) =  K^B_1(\Mod k)^{\ds t}$.  Let $\beta:  K^B_1(\Mod k)^{\ds t}\ra K^B_1(\C_0)$ be the map given by $\sum_{i=0}^{n+1}(-1)^iK^B_1(a_i)$.  Our first task is to show that $\ckr(\alpha)\cong\ckr(\beta)$.  The Gersten-Sherman transformation (see Theorem \ref{gersher}) $\zeta:  K^B_1\ra K_1$ provides the following commutative diagram for $i = 0, 1,\ldots, n+1$
\begin{diagram}
K_1(\Mod k)^{\ds t}&\rTo^{K_1(a_i)} & K_1(\C_0)\\
\dTo^{\zeta_{\Mod k}^{\ds t}}_\cong& &\dTo_{\zeta_{\C_0}}^\cong\\ 
K^B_1(\Mod k)^{\ds t}  &\rTo_{K^B_1(a_i)} & K^B_1(\C_0)
\end{diagram}
Where the vertical isomorphisms come courtesy of Theorem \ref{gersher}, as $\C_0$ and $\Mod k$ are semisimple exact categories.  Hence there is a commutative diagram 
\begin{diagram}
K_1(\Mod k)^{\ds t} &&\rTo^{\alpha}& & K_1(\C_0)\\
\dTo^{\zeta_{\Mod k}^{\ds t}}_\cong&&&&  \dTo_{\zeta_{\C_0}}^\cong\\ 
K^B_1(\Mod k)^{\ds t} &&\rTo_{\beta} && K^B_1(\C_0)
\end{diagram}
This gives that $\ckr(\alpha)\cong \ckr(\beta)$.  To finish the proof, first note that Remark \ref{rho} furnishes an isomorphism $\rho_k:  k^* \ra  K^B_1(\Mod k)$ such that $a\mapsto [k, a1_k]$, hence there is an isomorphism $\rho_k^{\ds t}:  (k^*)^{\ds t}\ra K^B_1(\Mod k)^{\ds t}$.  Now recall the isomorphism $\tau:  K_1^B(\mathcal C_0) \ra \Lambda^*_\ab$ (noting $\Lambda^*_\ab = \aut_R(L)_\ab$) of Proposition \ref{tildeprop}.  The map $\tau$ is such that for $L'\in \C_0$ and any $f\in\aut_R(L')$, $\tau([L', f]) = \det_{\Lambda^\text{op}}(\widetilde f)$, where $\det_{\Lambda^\text{op}}$ is the generalized determinant of Definition \ref{gendet} and $\wt f\in\aut_R(L)$ is the map obtained from the tilde construction of Subsection \ref{endokgroup}.  In particular, $\ckr(\beta)\cong \ckr(\tau\beta\rho_k^{\ds t})$, hence we calculate the latter.  Restricting to the $j$th coordinate of $(k^*)^{\ds t}$, by slight abuse of notation, we have for $a\in k^*$ 
$$\beta\rho_k(a) = \beta([k, a1_k]) = [L_j, a1_{L_j}] + \sum_{i=1}^{n+1}(-1)^i[C^j_{i-1}, a1_{C^j_{i-1}}]$$ 
By definition, $\det_{\Lambda^\text{op}}(\wt{a1_{L_j}}) =  \wt{a1_{L_j}} $, so that 
$$\tau\beta\rho_k(a) = \tau\left([L_j, a1_{L_j}] + \sum_{i=1}^{n+1}(-1)^i[C^j_{i-1}, a1_{C^j_{i-1}}]
\right) = \widetilde{a1_{L_j}}\prod_{i=1}^{n+1}\text{det}_{\Lambda^{\text{op}}}(\widetilde{a1_{C^j_{i-1}}})^{(-1)^i} $$ 
This is precisely the subgroup $\Xi$ of Definition \ref{defsha}, whence the result.  

\end{proof} 


\section{Existence of $n$-Cluster Tilting Objects in $\mcm R$}\label{sec4}

Naturally, the usefulness of Theorem \ref{thmg1} would be limited if the situations in which $\mcm R$ contained an $n$-cluster tilting object were sparse.  Fortunately for us, they are not.  Moreover, if $\mcm R$ admits an $n$-cluster tilting object $L$, we require that $\Lambda := \End_R(L)^{\text{op}}$ has finite global dimension.  At first glance, this condition might also seem limiting, but is in fact quite common, as seen in the following theorem. 

\begin{theorem}(\cite{i}, Theorem 3.12(a))\label{dn}

Suppose $\dim R = d$ and that $\mcm R$ contains an $n$-cluster tilting object $L$ with $d\leq n$.  Then $\Lambda$ has global dimension at most $n+1$.  

\end{theorem}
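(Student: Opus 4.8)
The plan is to reduce the computation of $\text{gl.dim}\,\Lambda$ to the projective dimensions of the simple $\Lambda$-modules and then to bound these using $n$-Auslander-Reiten sequences. Since $\Lambda = \End_R(L)^{\text{op}}$ is module-finite over the Henselian local ring $R$, it is semiperfect, and for such a ring the (left) global dimension equals the supremum of the projective dimensions of its finitely many simple modules. By Proposition \ref{emequiv} (with $N = L$), evaluation at $L$ gives an equivalence $e_L : \Mod\C \ra \Mod\Lambda$, where $\C = \Ad L$; under it the representable functors $(\bullet, L_j)$ correspond to the indecomposable projective $\Lambda$-modules and the simple functors $S_{L_j} = (\bullet, L_j)/\textbf{rad}_{\mcm R}(\bullet, L_j)$ correspond to the simple $\Lambda$-modules. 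Thus it suffices to show that each $S_{L_j}$ has projective dimension at most $n+1$ in $\Mod\C$.

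For the non-free summands (that is, $j > 0$) I would invoke the $n$-Auslander-Reiten sequence ending in $L_j$, whose existence is guaranteed by the standing hypotheses that $R$ is an isolated singularity and $\C$ is $n$-cluster tilting (assumptions (c), (d)). By Definition \ref{narhom}, applying the Yoneda functor to this sequence produces a minimal projective resolution
$$0 \ra (\bullet, C_n) \ra (\bullet, C_{n-1}) \ra \cdots \ra (\bullet, C_0) \ra (\bullet, L_j) \ra S_{L_j} \ra 0$$
of length $n+1$ in $\Mod\C$. Hence the projective dimension of $S_{L_j}$ is at most $n+1$ for every non-free indecomposable summand, and these simples present no difficulty. (As a sanity check, when $n=1$ this recovers the classical fact that the simples of an Auslander algebra indexed by non-projective modules have projective dimension $2$.)

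The genuine obstacle is the simple $S_R$ attached to the free summand $L_0 = R$: since $R$ is a projective object of $\mcm R$, there is no $n$-Auslander-Reiten sequence ending in it, and this is exactly the corner where the hypothesis $d \le n$ must be used. Here the plan is to construct a minimal $\C$-resolution of $S_R$ directly, taking successive minimal right $\C$-approximations of the radical $\textbf{rad}_{\mcm R}(\bullet, R)$ and of its syzygies, and to bound the length of this resolution by $d+1$. The key structural input is that $\mcm R$ has enough projectives (the free modules) and is closed under the relevant syzygies: if $0 \ra K \ra C \ra M \ra 0$ is exact in $\Mod R$ with $C, M \in \mcm R$, then the depth lemma forces $\dep K \ge d$, so $K$ again lies in $\mcm R$. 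Iterating this, and using that $R$ is Cohen-Macaulay of dimension $d$ together with the dualizing module supplied by assumption (b), a dimension-shifting argument (this is the content of (\cite{i}, Theorem 3.12)) shows the $\C$-resolution of $S_R$ terminates in a representable functor after at most $d+1$ steps, so that the projective dimension of $S_R$ is at most $d+1$; the assumption $d \le n$ then yields $d+1 \le n+1$. I expect this projective-corner bound to be the hard part, since it is where the Cohen-Macaulay geometry of $R$, rather than the purely categorical $n$-cluster tilting formalism, does the real work.

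Combining the three cases, every simple $\Lambda$-module has projective dimension at most $\max(n+1,\, d+1) = n+1$, and therefore $\text{gl.dim}\,\Lambda \le n+1$, as claimed.
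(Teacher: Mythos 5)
First, a point of reference: the paper does not prove Theorem \ref{dn} at all --- it is imported verbatim from Iyama --- so your proposal has to be judged against the underlying mathematics rather than against a proof in the text. Your overall skeleton is the right one: $\Lambda$ is Noetherian and semiperfect (as $R$ is Henselian), so its global dimension is the supremum of the projective dimensions of the finitely many simples; under the equivalence $\Mod(\Ad L)\simeq \Mod\Lambda$ of Proposition \ref{emequiv} these simples are the functors $S_{L_j} = (\bullet, L_j)/\textbf{rad}_{\mcm R}(\bullet, L_j)$; and for $j>0$ the bound $\text{pd}\, S_{L_j}\leq n+1$ is immediate from Definition \ref{narhom}, since the Yoneda image of the $n$-Auslander-Reiten sequence ending in $L_j$ is by definition a minimal projective resolution of $S_{L_j}$ of length $n+1$. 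You also correctly located the crux at the free vertex $L_0 = R$.

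The gap is precisely in your termination criterion for the resolution of $S_R$. Writing $\textbf{rad}(\bullet, R)|_{\C}\cong (\bullet, \mfm)|_{\C}$ and resolving by surjective right $\C$-approximations, the depth lemma does force the successive kernels $K_0, K_1,\ldots$ to gain one unit of depth per step, hence to become maximal Cohen-Macaulay after roughly $d$ steps. But maximal Cohen-Macaulay is \emph{not} enough to stop: the projective objects of $\Mod\C$ are (summands of) the functors $(\bullet, C')$ with $C'\in\C = \Ad L$, and membership $K_j\in\C$ requires, beyond $K_j\in\mcm R$, the orthogonality $\ext^i_R(L, K_j) = 0$ for $0 < i\leq n-1$; your implicit identification of ``lies in $\mcm R$'' with ``lies in $\C$'' is valid only when $n=1$. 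That orthogonality is acquired one degree at a time: from $0\ra K_j\ra C_j\ra K_{j-1}\ra 0$, surjectivity of $\hm_R(L, C_j)\ra \hm_R(L, K_{j-1})$ and $\ext^i_R(L, C_j) = 0$ for $0<i\leq n-1$ give $\ext^i_R(L, K_j)=0$ only for $i\leq \min(j+1, n-1)$. So the resolution is guaranteed to terminate at step $n-2$, not at step $\approx d$, and the bound it actually yields is $\text{pd}\, S_R\leq n$ --- your claimed bound $d+1$ is not established by this mechanism. Relatedly, the hypothesis $d\leq n$ is not just the cosmetic comparison $d+1\leq n+1$ at the end: it is what guarantees that by step $n-2$, when the orthogonality is finally in place, the kernel has also climbed to full depth $d$ and is therefore genuinely in $\C$. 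Finally, note that invoking ``(\cite{i}, Theorem 3.12)'' for the dimension-shifting step is circular, since that is exactly the statement being proved; with the corrected bookkeeping above the argument closes without it.
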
 

The most well-studied situation in which $\mcm R$ admits an $n$-cluster tilting object is the following. 

\subsection{Finite Cohen-Macaulay Type}\label{ss14}

Recall that we say that $R$ has finite Cohen-Macaulay type (or \textit{finite type} for short) when $R$ has only finitely many indecomposable maximal Cohen-Macaulay modules.  Now the only $1$-cluster tilting subcategory of $\mcm R$ is $\mcm R$ itself.  Thus the existence of a $1$-cluster tilting object for $\mcm R$ is equivalent to $R$ having finite type.  In particular, when $R$ has finite type, $\mcm R$ has an additive generator $M$.  For practical and computational purposes, when $R$ has finite type, we will often work with the $R$-module $M = M_0\ds M_1\ds\cdots\ds M_t$, with $M_0 = R$ and $M_1,\ldots, M_t$ the pairwise non-isomorphic and non-free indecomposable maximal Cohen-Macaulay $R$-modules.  Moreover, (\cite{G}, Theorem 6) shows that $\End_R(M)^{\text{op}}$ has finite global dimension, hence Theorem \ref{thmg1} is applicable in this situation.  In fact, in this case, if the Auslander-Reiten homomorphism $\Upsilon:  \ints^{\ds t} \ra \ints^{\ds (t+1)}$ is injective, Theorem \ref{thmg1} is just (\cite{H}, Theorem 2.12), the result which inspired Theorem \ref{thmg1}.   

\subsubsection{ADE Singularities}\label{sss14} 

The most important examples of rings that have finite type are the simple surface singularities.  These are called the ADE singularities.  Let $S = k[[x, y, z_2, z_3,\ldots, z_d]]$  and assume $k$ is algebraically closed with characteristic different from $2, 3$ and $5$.  Set $R = S/fS$ with $f$ nonzero and $f\notin (x, y, z_2,\ldots, z_d)^2$.  The $f$ for which $R$ has finite type are exactly the following (\cite{GR}, Theorem 9.8) 
\begin{align*}
(A_n) &&   x^2 + y^{n+1} + z_2^2 + z_3^2 +\cdots + z_d^2 && (n\geq 1)
\\(D_n) && x^2y + y^{n-1} + z_2^2 + z_3^2 +\cdots + z_d^2 && (n\geq 4)
\\(E_6) & & x^3 + y^4 + z_2^2 + z_3^2 +\cdots + z_d^2
\\ (E_7) &&  x^3 + xy^3 + z_2^2 + z_3^2 +\cdots + z_d^2
\\ (E_8) && x^3 + y^5 + z_2^2 + z_3^2 +\cdots + z_d^2
\end{align*}
\subsection{Invariant Subrings}\label{ss24}

Let $k$ be a field and $S$ the ring $k[[x_1,\ldots, x_n]]$.  Suppose $G$ is a finite subgroup of $GL_n(k)$ that does not contain any nontrivial pseudo-reflections and with $|G|$ invertible in $k$.  Let $R$ be the invariant subring $k[[x_1,\ldots, x_n]]^G$ of $S$, where $G$ acts by a linear change of variables on $S$.  If $R$ is an isolated singularity, then the $R$-module $S$ is an $(n-1)$-cluster tilting object (see (\cite{i2}, 2.5)).  

\vs

The \textit{skew group ring} of $S$, denoted by $S\# G$, is given by $S\#G  = \bds_{\sigma\in G} S\cdot \sigma$,  with multiplication defined by $(s\cdot \sigma)(t\cdot \tau) = s\sigma(t)\cdot\sigma\tau$.  In this situation, $S\#G$ has global dimension equal to $n$ (\cite{GR}, Corollary 5.8) and there is an isomorphism $\End_R(S)\cong S\# G$ (\cite{GR}, Theorem 5.15).  In particular, Theorem \ref{thmg1} is applicable in this situation.

\subsection{Reduced Hypersurface Singularities}\label{ss34}

\subsubsection{Dimension One}\label{sss24}

Let $k$ be an algebraically closed field of characteristic not two and $S = k[[x, y]]$.  For $f\in (x,y)$, let $R = S/fS$ be a reduced hypersurface singularity.  Suppose $f$ has prime factorization and $f = f_1\cdots f_n$, $S_i = S/(f_1\cdots f_i)S$ and $L$ is the $R$-module $S_1\ds\cdots\ds S_n$.  If $f_i\notin (x, y)^2$ for all $i$, then \cite{hc} shows that $L$ is a $2$-cluster tilting object for $\mcm R$.  Moreover, Theorem \ref{dn} shows that $\End_R(L)^{\text{op}}$ has global dimension at most three.  Hence we can apply Theorem \ref{thmg1} in this situation.  Note, in particular, if $\lambda_1,\ldots, \lambda_n$ are distinct elements of $k$, then Theorem \ref{thmg1} is applicable to the ring $S/fS$ with $f = (x - \lambda_1y)\cdots (x-\lambda_ny)$.  

\subsubsection{Dimension Three}\label{sss34}

Keep notation as above, but set $S' = k[[x, y, u, v]]$ and $R' = S'/(f+uv)S'$.  Then $\mcm R'$ has a $2$-cluster tilting object if $f_i\notin (x,y)^2$ for all $i$ and it is given by $L := U_1\ds\cdots\ds U_n$ with $U_i = (u, f_1\cdots f_i)\subset R'$ (\cite{i}, Theorem 4.17).  Moreover, (\cite{i}, Theorem 4.17) also says $\End_{R'}(L)^{\text{op}}$ has finite global dimension, so Theorem \ref{thmg1} is applicable in this situation.   


\section{Abelianization of Automorphism Groups}\label{abgroup} 

Of course, the usefulness of Theorem \ref{thmg1} would be limited if one were unable to compute $\aut_R(L)_\ab$.  We make several computations, though each computation is tailored specifically to each ring and it seems difficult to find results that hold generally.  Our computations rely significantly upon the general framework laid out by \cite{H} and this work serves strongly as inspiration for our results.  The purpose of this section is to prove the following.  

\begin{proposition}\label{mainab}

Let $k$ be an algebraically closed field of characteristic not equal to two.  Then 

\vs 

(a)  if $R = k[x]/x^nk[x]$ and $M =  R \ds xR \ds\cdots \ds x^{n-1}R$, then $\aut_R(M)_\ab\cong (k^*)^{\oplus n}$.

\vs

(b)  if $k$ also has characteristic not equal to $3$ or $5$, $R = k[[t^2, t^{2n+1}]]$, $n\geq 0$ and $M = R \ds R_1\ds\cdots \ds R_n$, with $R_ i = k[[t^2, t^{2(n-i)+1}]$, then $\aut_R(M)_\ab \cong (k^*)^{\oplus n}\ds k[[t]]^*$.  

\vs 

(c) if $R = k[[s^2, st, t^2]]$, then $\aut_R(R\ds (s^2, st)R)_\ab\cong k^*\ds R^*$.  

\vs 

(d)  if $S = k[[x, y]]$, $f_1,\ldots, f_n\in (x, y)$ are irreducible such that 

\vs

   \hspace{.5 cm}(i)  $f = f_1\cdots f_n$, $R := S/fS$,  is an isolated singularity (i.e. $(f_i) \neq (f_j)$),
		
		\vs
		
		\hspace{.5 cm}(ii)  $f_i\notin (x, y)^2$ for all $i$,
	
	\vs
	
	\hspace{.5 cm}(iii)  $(f_i, f_{i+1}) = (x, y)$,  
	
	\vs 
	
	 $S_i = S/(f_1\cdots f_i)S$, and $L = S_1\ds\cdots\ds S_n$, then 
$$\aut_R(L)_\ab \cong (S/f_1S)^*\ds\cdots\ds (S/f_nS)^* = \overline{R}^*$$ 
Where $\overline{R} = S/f_1S\ds\cdots\ds S/f_nS$ is the integral closure of $R$ in its total quotient ring.  

\vs 
		
(e) if $k$ has characteristic zero, $S' = k[[x, y, u, v]]$,  $R' = S'/(f+uv)S'$, where $f = f_1\cdots f_n$ with $f_i\in k[[x, y]]$ satisfying the conditions in (d),  $U_i = (u, f_1\cdots f_i)$, and  $L = U_1\ds\cdots\ds U_n$, then 
$$\aut_{R'}(L)_\ab \cong  R'^* \ds k[[w, z]]^{*\ds (n-1)}$$
where $w, z$ are variables over $k$.  

\end{proposition} 

Of course, the purpose of Proposition \ref{mainab} is to combine it with Theorem \ref{thmg1} calculate explicit examples of $G_1(R)$ for several hypersurface singularities.  This will be done in Section \ref{examples}.  

\vs

We set up some useful notation.  Let $N_1,\ldots, N_s$ be $A$-modules and consider the $A$-module $N: = N_1\ds\cdots\ds N_s$.  We view the elements of $N$ as column vectors and the endomorphism ring of $N$ has a matrix-like structure:  For $f\in \End_A(N)$, we can write $f = (f_{ij})$ with $f_{ij}\in\hm_A(N_j, N_i)$ and composition with another endomorphism $g = (g_{ij})$ can be accomplished in the same manner one would multiply matrices with entries in $A$.  We write a $\text{diag}(\alpha_1,\ldots, \alpha_s)$ for the diagonal endomorphism of $N$ with $\alpha_i \in \End_A(N_i)$.  For $\alpha\in \aut_A(N_j)$, we denote by $d_j(\alpha)$ the automorphism of $N$ given by $\text{diag}(1_{N_1},\ldots, 1_{N_{j-1}}, \alpha, 1_{N_{j+1}},\ldots, 1_{N_s})$.  For $i\neq j$ and $\beta\in \hm_A(N_j, N_i)$, we denote by $e_{ij}(\beta)$ the automorphism of $N$ with diagonal entries $1_{N_1},\ldots, 1_{N_s}$ and $(i, j)$th entry given by $\beta$ and zeros elsewhere.  Before we begin, we discuss calculations that will be used often in the sequel.  

\begin{lemma} (\cite{H}, Lemma 9.2)\label{holmlemma} 

Let $A$ be a ring in which $2$ is invertible, $N_1,\ldots, N_s$ be $A$-modules and $N := N_1 \ds \cdots \ds N_s$.  If $i\neq j$ and $\alpha\in\hm_A(N_j, N_i)$, then $e_{ij}(\alpha)$ is a commutator in $\aut_A(N)$.   
\end{lemma} 
\begin{proof} 
Given $\beta, \gamma$ in $\aut_A(N)$, the commutator of $\beta$ and $\gamma$ is $[\beta, \gamma] = \beta\gamma\beta^{-1}\gamma^{-1}$.  It is not hard to see that $e_{ij}(\alpha) = [e_{ij}(\frac{\alpha}{2}), d_j(-1_{N_j})]$.  
\end{proof} 
\begin{lemma}\label{commutator} 
Let $(A, \mfn)$ be commutative and local such that $2$ is invertible in $A$.  Let $N_1,\ldots, N_s$ be $A$-modules and set $N = N_1 \ds \cdots \ds N_s$.  Let $a\in 1 + \mfn$, and consider the automorphism $d_i(a1_{N_i})d_{i+1}(a^{-1}1_{N_{i+1}})$ of $N$.  Suppose either 

\vs 

(a)  $N_i\supseteq N_{i+1}$ and $\mfn N_i\subseteq N_{i+1}$ or 

\vs 

(b) $N_i\subseteq N_{i+1}$ and $(1-a)N_{i+1}\subseteq N_i$ 

\vs 

then $d_i(a1_{N_i})d_{i+1}(a^{-1}1_{N_{i+1}})$ is in the commutator subgroup of $\aut_A(N)$.  
\end{lemma}  
\begin{proof} 
In the case of (a), Let $\iota_i:  N_{i+1}\ra N_i$ be inclusion.  Now note that $a^{-1}\in 1+\mfn$, so that we have the following decomposition of $d_i(a1_{N_i})d_{i+1}(a^{-1}1_{N_{i+1}})$:  
$$e_{i+1,i}((a^{-1} -1)1_{N_i})e_{i, i+1}(\iota_i)e_{i+1, i}((a-1)1_{N_i})e_{i, i+1}(-a^{-1}\iota_i)$$ 
We apply Lemma \ref{holmlemma} to see that $d_i(a1_{N_i})d_{i+1}(a^{-1}1_{N_{i+1}})$ is in the commutator subgroup of $\aut_A(N)$.  

\vs 

In the case of (b), notice that our hypothesis implies $(a^{-1}-1)N_{i+1}\subseteq N_i$.  Let $\iota_i:  N_i\ra N_{i+1}$ be the inclusion map.  We have the following decomposition of $d_i(a1_{N_i})d_{i+1}(a^{-1}1_{N_{i+1}})$:  
$$e_{i, i+1}((a^{-1} - 1)1_{N_{i+1}})e_{i+1, i}(\iota_i)e_{i, i+1}((a - 1)1_{N_{i+1}})e_{i+1, i}(-a^{-1}\iota_i)$$ 
and once again, we apply Lemma \ref{holmlemma} to see that $d_i(a1_{N_i})d_{i+1}(a^{-1}1_{N_{i+1}})$ is in the commutator subgroup of $\aut_A(N)$.  
\end{proof}  
\subsection{Truncated Polynomial Rings in One Variable}\label{ss15}
Our aim here is to prove (a) of Proposition \ref{mainab}.  That is $k$ is algebraically closed and has characteristic not two, $R = k[x]/x^nk[x]$, $\mfm$ is its maximal ideal $xR$, then with $M = R \ds \mfm \ds\cdots \ds \mfm^{n-1}$, we have $\aut_R(M)_\ab \cong (k^*)^{\ds n}$.  
\begin{proof} 
Denote by $R_j$ the ring $k[x]/x^jk[x]$ for $1\leq j\leq n$.  Note that $R_{j-1}\subset R_j$ and $R = R_n$.  Let $\mfm$ denote the maximal ideal $xR$ of $R$.  Then $\End_R(\mfm^i)$ is isomorphic to the local ring $R_{n-i}$.  Let $M$ be the $R$-module $ R\ds \mfm\ds\cdots\ds \mfm^{n-1}$.  We set $E = \End_R(M)$ and seek to show $E^*_\ab \cong (k^*)^{\ds n}$.  

\vs

For $n = 1$, this is clear.  For $n = 2$, we have $\End_R(\mfm)  = k$, so that $E^*_\ab \cong (k^*)^{\ds 2}$ by (\cite{H}, Proposition  9.6).  

\vs 

Suppose now $n\geq 3$.  We first show that there is a surjection $E^*_{\text{ab}}\ra (k^*)^{\oplus n}$ such that the kernel consists of diagonal matrices $\alpha = (\alpha_{ii})$ with $\alpha_{ii}\in \aut_R(\mfm^{i-1}) = R_{n-i+1}^*$.   By (\cite{H}, Proposition 9.4), $(\alpha_{ij})\in E$ is invertible if and only if $\alpha_{ii}$ is invertible for all $i$.  In particular, this gives that every two-sided maximal ideal of $E$ is of the form $ \mfn_i := \stl (\alpha_{ij}) : \alpha_{ii}\in J(\End_R(\mfm^{i-1}))\str$.  Hence the Chinese Remainder Theorem gives $E/J(E) \cong E/\mfn_1\times\cdots\times E/\mfn_n = k\times\cdots\times k$.  In particular, there is an induced surjection $\vp:  E^*_\ab \twoheadrightarrow (k^*)^{\ds n}$.  We appeal to (\cite{H}, Corollary 9.5) to see every element of $E^*_{\text{ab}}$ can be represented by a diagonal automorphism.  Moreover, it is clear elements in the kernel $\vp$ are given by $(\alpha_{ii})$ such that $\alpha_{ii}$ is multiplication by an element in $1 +J(\End_R(\mfm^{i-1})) = 1 + xR_{n-i+1}$ for all $i$.  

\vs 

We now demonstrate the injectivity of $\vp$.  Let $\alpha\in E^*_{\ab}$ such that $\vp(\alpha)$ is trivial.  By the above, we can write $\alpha = (\alpha_{ii})$ such that $\alpha_{ii}$ is multiplication by an element of $1 + xR_{n-i+1}^* $.  Now every endomorphism on $\mfm^{n-1}$ is given by an element of $1 + xR_1 = \left\{1\right\}$, so that we can write $\alpha = d_1(\alpha_{11})\cdots d_{n-1}(\alpha_{n-1,n-1})$.  It suffices to show each $d_i(\alpha_{ii})$ is in the commutator subgroup of $E^*$.  We do this below.  

\vs

We show by decreasing induction on $i$ that $d_i(\beta)$ can be written as a product of commutators, where $\beta$ is given by multiplication by an element of $1 + xR_{n-i+1}$.  For $i = n-1$, write $\beta= r1_{\mathfrak m^{n-2}}$, where $r\in 1 +xR_2$.  Notice that $r^{-1}\in 1 +xR_2$ as well, hence multiplication by $r^{-1}$ restricts to the identity on $\mathfrak m^{n-1}$.  This gives 

$$d_{n-1}(\beta) = d_{n-1}(r1_{\mathfrak m^{n-2}}) = d_{n-1}(r1_{\mathfrak m^{n-2}})d_n(r^{-1}1_{\mathfrak m^{n-1}})$$ 

By Lemma \ref{commutator}, $d_{n-1}(r1_{\mathfrak m^{n-2}})d_n(r^{-1}1_{\mathfrak m^{n-1}})$ is in the commutator subgroup of $E^*$, hence so is $d_{n-1}(\beta)$.  Suppose now $i < n-1$ and $\beta\in\aut_R(\mfm^{i-1})$ is given by multiplication on $\mfm^{i-1}$ by an element of $1 +xR_{n-i+1}$.  We have 

$$d_i(\beta) = d_i(\beta)d_{i+1}(\beta^{-1}|_{\mfm^i})d_{i+1}(\beta |_{\mfm^i})$$ 

By the induction hypothesis, $d_{i+1}(\beta |_{\mfm^i})$ is in the commutator subgroup of $E^*$.  By Lemma \ref{commutator}, $d_i(\beta)d_{i+1}(\beta^{-1}|_{\mfm^i})$ is in the commutator subgroup of $E^*$, hence so is $d_i(\beta)$.  This completes the induction step and gives that $E^*_\ab \cong (k^*)^{\ds n}$.  
\end{proof} 
\subsection{Singularty of Type $A_{2n}$ in Dimension One}\label{ss25}
Our aim here is to prove (b) of Proposition \ref{mainab}.  Thus, $k$ is an algebraically closed field of characteristic not equal to  $2, 3$ or $5$ and $R$ the ring $k[[t^2, t^{2n+1}]]$.  Set $R = R_0$ and let $M$ be the $R$-module $R_0\ds R_1\ds\cdots \ds R_n$, where $R_i = k[[t^2, t^{2(n-i)+1}]]$ for $i = 0,\ldots, n$.  Then we want to show $\aut_R(M)_\ab \cong (k^*)^{\ds n} \ds k[[t]]^*$.  Before we begin, we prove the following.  

\begin{lemma}\label{homan} 
Let $0\leq i, j\leq n$.  If 

\vs 

(a)  $i\leq j$, then $\hm_R(R_i, R_j) = R_j$.  

\vs 

(b) $i > j$, then $\hm_R(R_i, R_j)$ can be viewed as a subset of $R$.  In particular, it is contained in $R_n = k[[t]]$.   

\vs 

As a consequence of the above, we can view $E := \End_R(M)$ as a subring of $\mathbb M_{n+1}(R_n) = \mathbb M_{n+1}(k[[t]])$.  

\end{lemma}  

\begin{proof} 
(a)  We claim 
$$\hm_R(R_i, R_j) = \hm_{R_i}(R_i, R_j)$$ 
Indeed, since $R\subseteq R_i$, there is a natural inclusion $\hm_{R_i}(R_i, R_j)\subseteq \hm_R(R_i, R_j)$.  We demonstrate the reverse inclusion.  Let $s\in R_i$, $f\in R_i$ and $\varphi\in \hm_R(R_i, R_j)$.  It is not hard to see that there is a nonzero $r\in R$ such that $rs\in R$ (for example, by noting that $t^{2n+1}\in R$, $t^{2n+1}k[[t]]\subseteq R$, and $R_i\subseteq k[[t]]$).  We have 
$$r\vp(sf) = \vp(rsf) = rs \vp(f)$$ 
and $r$ is nonzero, so that $\vp(sf) = s\vp(f)$.  This proves the claim.  Thus, we have 
$$\hm_R(R_i, R_j) = \hm_{R_i}(R_i, R_j)$$ 
and the latter is naturally isomorphic to $R_j$.  

\vs 

(b)  By (\cite{int}, Lemma 2.4.3), there is an isomorphism of $R$-modules:  
$$\hm_R(R_i, R_j) \cong (R_j :_R R_i)$$ 
Where $(R_j :_R R_i)$ is the ideal of $R$ consisting of $f\in R$ such that $fR_i\subseteq R_j$.  

\vs 

Utilizing (a) and (b), we see that $E$ can be viewed as the subring of $\mathbb M_{n+1}(R_n) = \mathbb M_{n+1}(k[[t]])$ given by 
$$\begin{pmatrix}
R_0 & R_1 & R_2 & R_3 &\cdots & R_n \\
(R_1 :_R R_0) & R_1 & R_2 & R_3 &\cdots & R_n \\
(R_2 :_R R_0) & (R_2 :_R R_1) & R_2 & R_3 &\cdots & R_n  \\
\vdots &\vdots&\vdots &\vdots &\vdots&\vdots \\
(R_n :_R R_0) & (R_n :_R R_1) & (R_n :_R R_2) & (R_n :_R R_3) &\cdots & R_n\\
\end{pmatrix}$$
\end{proof} 

We now proceed with the proof of (b) of Proposition \ref{mainab}.  

\begin{proof}  
Note the $R_i$ are finitely generated $R$-modules; each $R_i$ is local with maximal ideal $\mfm_i = (t^2, t^{2(n-i)+1})R_i$; we have inclusions $R_i\subseteq R_{i+1}$ and $\mfm_i\subseteq \mfm_{i+1}$; and each $R_i$ has $k$ as a residue field.  

\vs

This is clear for $n = 0$.  For $n = 1$, this is just (\cite{H}, Proposition  9.6), since $k[[t]]\cong (t^2, t^3)k[[t^2, t^3]]$ as $k[[t^2, t^3]]$-modules.  

\vs 

Suppose now $n\geq 2$.  Our goal is to construct a map from $E^*$ to the abelian group $(k^*)^{\ds n}\ds k[[t]]^*$, so that we obtain an induced map $E^*_\ab \ra (k^*)^{\ds n} \ds k[[t]]^*$ that we will later show is an isomorphism.  

\vs 

First we construct a map from $E^*$ to $(k^*)^{\ds n}$.  The proof that there is group homomorphism from $E^* \ra (k^*)^{\ds n}$ works in exactly the same manner as as it did in the proof of (a) of Proposition \ref{mainab}.  Noting of course that with $\mfn_i = \stl (\alpha_{ij}) : \alpha_{ii}\in J(\End_R(R_{i-1}))\str$, (a) of Lemma \ref{homan} gives that $E/\mfn_i = \End_R(R_{i-1})/J(\End_R(R_{i-1}))  = R_{i-1}/\mfm_{i-1} = k$.  Thus we obtain an induced map $E^*_\ab\ra (k^*)^{\ds n}$. 
 
\vs 

As Lemma \ref{homan} allows us to view $E$ as a subring of $\mathbb M_{n+1}(R_n) = \mathbb M_{n+1}(k[[t]])$, $E^*$ is naturally a subset of $GL_{n+1}(k[[t]])$, the group of invertible $(n+1)\times (n+1)$ matrices over $k[[t]]$.  By taking the determinant, we obtain a map from $E^* \ra k[[t]]^*$.  Now $k[[t]]^*$ is abelian, hence this induces a group homomorphism $E^*_\ab\ra k[[t]]^*$.  

\vs

Regarding $E$ as a matrix subring of $\mathbb M_{n+1}(k[[t]])$, we combine our preceding work to see there is a group homomorphism $\Phi: E^*_\ab\ra (k^*)^{\oplus n}\ds R_n^*$ such that the image of $\alpha = (\alpha_{ij})\in E^*_\ab$ under $\Phi$ is 
$$(\alpha_{11} + \mfm_0, \ldots, \alpha_{nn}+\mfm_{n-1}, \text{det}(\alpha))$$ 

We note $\Phi$ is surjective:  For $(a_1,\ldots a_n, f)\in (k^*)^{\oplus n}\ds R_n^*$,  $(a_1,\ldots a_n, f)$ is the image under $\Phi$ of
$$\text{diag}(a_1, a_2,\ldots, a_n, (a_1a_2\cdots a_n)^{-1}f)$$ 
To see that $\Phi$ is injective, let $\alpha\in E^*_\ab$ such that $\Phi(\alpha)$ is trivial.  By (\cite{H}, Corollary 9.5), we may assume that $\alpha\in E^*_\ab$ is diagonal.  Write $\alpha = \text{diag}(f_0, f_1,\ldots, f_{n-1}, f_n)$, with $f_{i-1}\in (R_{i-1})^*$ by Lemma \ref{homan}.  Since $\Phi(\alpha)$ is trivial, $f_{i-1}\in 1+\mfm_{i-1}$ for $i=1,\ldots, n$ and $f_0f_1\cdots f_n = 1$ in $R_n^* = k[[t]]^*$.  Hence for $i = 1,\ldots, n$, $\alpha$ is the product of the diagonal automorphisms $\beta_i = d_i(f_{i-1})d_{n+1}(f_{i-1}^{-1})$.  Consider the automorphisms  $\gamma_i = d_i(f_{i-1})d_{i+1}(f_{i-1}^{-1})$ and note that $\beta_i = \gamma_i\cdots \gamma_n$.  To see that $\gamma_i$ is in the commutator subgroup, note that $f_{i-1}^{-1}$ is in $1 + \mfm_{i-1}$, hence multiplication by $f_{i-1}-1$ maps $R_i$ into $R_{i-1}$.  Indeed, multiplication by $\mfm_{i-1}$ on $\mfm_i$ takes $\mfm_i$ into $\mfm_{i-1}$.  Moreover, any unit in $R_i$ is a power series with nonzero constant term, hence multiplication on $R_i$ by an element in $\mfm_{i-1}$ takes $R_i$ into $R_{i-1}$.  Thus the hypotheses of Lemma \ref{commutator} are satisfied, so that each $\gamma_i$ is in the commutator subgroup of $E^*$, hence so is each $\beta_i$, and ultimately so is $\alpha$.  Thus $\Phi$ is injective, hence an isomorphism.  
\end{proof} 

\subsection{ Generalities for Invariant Subrings}\label{ss35}

Let $k$ be a field.  Recall from Section \ref{sec4} that $S$ is the ring $k[[x_1,\ldots, x_n]]$, $G$ is a finite subgroup of $GL_n(k)$ that does not contain any nontrivial pseudo-reflections with $|G|$ invertible in $k$ and $R$ is the invariant subring $S^G$ of $S$ (where $G$ acts by a linear change of variables on $S$).  Then if $R$ is an isolated singularity, the $R$-module $S$ is an $(n-1)$-cluster tilting object in $\mcm R$.   

\vs 

We need the following lemmas for the proof of (c) of Proposition \ref{mainab}.  

\begin{lemma}\label{norm} 

Let $A$ be a local Cohen-Macaulay integral domain of dimension $d > 1$ such that $A$ is an isolated singularity.  Then $A$ is normal and $\hm_A(I, I) \cong A$ for any ideal $I$ of height one.  

\end{lemma}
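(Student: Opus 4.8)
The plan is to establish normality via Serre's criterion and then to compute $\hm_A(I,I)$ by realizing it as a subring of the fraction field $K := \mathrm{Frac}(A)$. First I would invoke Serre's criterion: a Noetherian domain is normal precisely when it satisfies $(R_1)$ and $(S_2)$. Condition $(S_2)$ is immediate since $A$ is \CM. For $(R_1)$ I would use that $A$ is an isolated singularity together with $\Dim A = d > 1$: any prime $\mathfrak p$ of height at most one is distinct from the maximal ideal $\mathfrak m$ (whose height is $d > 1$), so $A_{\mathfrak p}$ is regular for every such $\mathfrak p$, the height-zero case being the fraction field. Hence $(R_1)$ holds and $A$ is normal, i.e.\ integrally closed in $K$.

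Next I would identify $\hm_A(I,I)$ inside $K$. Since $I$ is a nonzero ideal of the domain $A$, it is torsion-free of rank one, so $I\otimes_A K \cong K$. Any $\phi\in\hm_A(I,I)$ therefore extends to a $K$-linear endomorphism of $K$, which is multiplication by a unique $q_\phi\in K$; restricting back gives $\phi(x) = q_\phi x$ for all $x\in I$, and in particular $q_\phi I \sbe I$. This produces an injective ring homomorphism $\hm_A(I,I)\hook K$ whose image is $\{q\in K : qI\sbe I\}$ and which plainly contains $A$ via the homotheties.

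It then remains to show $\{q\in K : qI\sbe I\} = A$. I would fix such a $q$, choose generators $I = (a_1,\ldots,a_m)$, and write $q a_i = \sum_j c_{ij} a_j$ with $c_{ij}\in A$. The determinant trick (Cayley--Hamilton applied to the matrix $(c_{ij})$) yields $\det\!\big(q\,\mathrm{Id} - (c_{ij})\big)\,a_i = 0$ for every $i$; since $I\neq 0$ and $A$ is a domain, the determinant vanishes, exhibiting $q$ as a root of a monic polynomial over $A$. Thus $q$ is integral over $A$, and normality forces $q\in A$. Combining the three steps gives $\hm_A(I,I)\cong A$.

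I expect the height-one hypothesis to play no essential role beyond guaranteeing that $I$ is a nonzero ideal: the computation above works verbatim for any nonzero ideal. The only genuinely delicate point is the verification of $(R_1)$, where one must use $d > 1$ to ensure that height-one primes avoid $\mathfrak m$, so that the isolated-singularity hypothesis actually applies; the integrality step is routine once $\hm_A(I,I)$ has been placed inside $K$.
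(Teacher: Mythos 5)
Your proof is correct, and its second half takes a genuinely different route from the paper. For normality the two arguments coincide: both invoke Serre's criterion, with $(S_2)$ from the \CM hypothesis and $(R_1)$ from the isolated-singularity hypothesis together with $d>1$; you simply spell out what the paper dismisses as ``clear.'' For the endomorphism ring, the paper also realizes $\hm_A(I,I)$ inside the fraction field --- citing (Swanson--Huneke, Lemma 2.4.3) to identify it with $\frac{1}{x}(xI :_A I)$ for a nonzero $x\in I$ --- but then finishes \emph{locally}: it shows $(xI :_A I) = xA$ by localizing at associated primes, using the isolated-singularity hypothesis a second time to make $A_\mfp$ a discrete valuation ring there, so that $IA_\mfp$ is principal and the colon ideal collapses to $xA_\mfp$. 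You instead finish \emph{globally}: the determinant trick shows that any $q$ in the fraction field with $qI\sbe I$ is integral over $A$, and normality then forces $q\in A$. Your route buys generality and robustness: it works verbatim for every nonzero ideal (the height-one hypothesis really is superfluous, as you note), it routes the isolated-singularity hypothesis through normality exactly once, and it sidesteps the delicate point in the paper's localization step --- namely, one must know that the relevant associated primes all have height one (a unmixedness-type fact resting on the \CM hypothesis) before the DVR argument can be applied at them. What the paper's approach buys is the explicit colon-ideal description $\frac{1}{x}(xI:_A I)$ of the endomorphism ring, which remains a useful computational template in situations where that ring is strictly larger than $A$ (e.g.\ over non-normal rings); but for the statement at hand your argument is the more economical and more general one.
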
 

\begin{proof} 

Clearly $A$ satisfies Serre's criterion for normality.  For the second part, choose $x\in I$ to be nonzero.  Then (\cite{int}, Lemma 2.4.3) shows that $\hm_A(I, I)$ can be identified with the $A$-submodule $\frac{1}{x}(xI :_A I)$ of the quotient field of $A$.  Now $(Ix :_A I)$ is nonzero and contained in $I$, so must have height one.  If $I$ is principal, it is clear that $(xI :_A I) = xA$.  However, as $(xI :_A I)$ has height one and $A$ is an isolated singularity, $A_\mfp$ is a discrete valuation ring for every associated prime $\mfp$ of $(xI :_A I)$, hence $(xIA_\mfp :_{A_\mfp} IA_\mfp) = xA_\mfp$.  Thus $(xI :_A I) = xA$ and $\hm_A(I, I)\cong A$.  

\end{proof} 

\begin{lemma}(\cite{dfi1}, Lemma 5.4)\label{grade}

Let $A$ be a commutative Noetherian ring.  Then for any ideal $I$ and module $M$ such that $\text{grade}(I, M)\geq 2$, we have $\hm_A(I, M) \cong \hm_A(A, M) \cong M$.  

\end{lemma}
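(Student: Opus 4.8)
The plan is to deduce the isomorphisms purely formally from the cohomological characterization of grade, applied to the tautological presentation of $A/I$. First I would recall that, since $A$ is Noetherian, the ideal $I$ is finitely generated, so $A/I$ is a finitely generated (indeed cyclic) module, and the grade of $I$ on $M$ is computed by
$$\text{grade}(I, M) = \inf\stl i\geq 0 : \ext^i_A(A/I, M)\neq 0\str,$$
with the convention $\inf\emptyset = \infty$; this identification is valid for an arbitrary module $M$ precisely because $A/I$ is finitely presented over the Noetherian ring $A$. Consequently, the hypothesis $\text{grade}(I, M)\geq 2$ is equivalent to the two vanishing statements
$$\hm_A(A/I, M) = \ext^0_A(A/I, M) = 0 \qquad\text{and}\qquad \ext^1_A(A/I, M) = 0.$$

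Next I would write down the canonical short exact sequence
$$0\ra I\ra A\ra A/I\ra 0$$
and apply the contravariant functor $\hm_A(\bullet, M)$ to obtain the initial segment of the associated long exact sequence of $\ext$ groups,
$$0\ra \hm_A(A/I, M)\ra \hm_A(A, M)\ra \hm_A(I, M)\ra \ext^1_A(A/I, M)\ra\cdots.$$
Substituting the two vanishing results from the previous step, the outer terms $\hm_A(A/I, M)$ and $\ext^1_A(A/I, M)$ are both zero, so exactness forces the restriction map $\hm_A(A, M)\ra\hm_A(I, M)$ induced by the inclusion $I\hook A$ to be an isomorphism. Finally, composing with the standard evaluation isomorphism $\hm_A(A, M)\cong M$, $\phi\mapsto\phi(1)$, yields the desired chain $\hm_A(I, M)\cong\hm_A(A, M)\cong M$.

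I do not expect any genuine obstacle here: the argument is entirely formal once the grade hypothesis is translated into Ext-vanishing. The only point that warrants a moment of care is the very first step, namely that grade may be read off as the index of the first nonvanishing $\ext^i_A(A/I, M)$ even when $M$ is not assumed finitely generated; this is legitimate because the characterization depends only on $A/I$ being finitely presented, which is automatic over the Noetherian ring $A$, and in the degenerate case $IM = M$ (where $\text{grade}(I, M) = \infty$) every $\ext^i_A(A/I, M)$ vanishes, so the same long exact sequence delivers the conclusion uniformly.
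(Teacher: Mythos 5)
The paper itself gives no proof of this lemma: it is quoted directly from the cited reference, and your long-exact-sequence argument is the standard proof of that result. Applying $\hm_A(\bullet, M)$ to $0\ra I\ra A\ra A/I\ra 0$ and using the vanishing of $\hm_A(A/I, M)$ and $\ext^1_A(A/I, M)$ to conclude that the restriction map $\hm_A(A, M)\ra \hm_A(I, M)$ is an isomorphism is exactly right, and it is all the paper needs: the lemma is only ever invoked for $M = R$ or $M$ an ideal of $R$, which are finitely generated.

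The one genuine flaw is in the step you yourself flagged as the point requiring care. Rees's characterization $\text{grade}(I, M) = \inf\stl i : \ext^i_A(A/I, M)\neq 0\str$ does \emph{not} hold for arbitrary $M$ merely because $A/I$ is finitely presented, and your closing claim that $IM = M$ forces all $\ext^i_A(A/I, M)$ to vanish is false without finite generation of $M$. Take $A = \ints$, $I = (p)$, and $M = \ints[1/p]/\ints$ the Pr\"ufer $p$-group: then $IM = pM = M$, so the regular-sequence definition with your convention gives $\text{grade}(I,M) = \infty$, yet $\hm_\ints(\ints/p\ints, M)$ is the $p$-torsion subgroup of $M$, which is $\ints/p\ints \neq 0$; moreover, the restriction map $\hm_\ints(\ints, M)\ra \hm_\ints(I, M)$ identifies with multiplication by $p$ on $M$, which is not injective, so the conclusion of the lemma (as a statement about the natural map) also fails. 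The reason is that Rees's theorem and the determinant-trick argument for the degenerate case both use finite generation of $M$, not of $A/I$ (via associated primes and Nakayama-type arguments). The repair is immediate: either read ``module'' as ``finitely generated module'' --- consistent with the paper's standing convention for $\Mod A$ and sufficient for every application made of the lemma --- or, for general $M$, take the Ext-vanishing condition itself as the definition of grade. Under either reading, your exact-sequence argument goes through verbatim.
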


\subsection{Singularity of Type $A_1$ in Dimension Two}\label{sss15}

Our aim here is to prove (c) of Proposition \ref{mainab}.  Thus $R$ is the $A_1$ singularity $k[[s^2, st, t^2]]$ in dimension two with $\text{char}(k)\neq 2$.  If $I = (s^2, st)R$, then $\aut_R(R\ds I)_\ab\cong k^*\ds R^*$.  

\begin{proof} 

By (\cite{GR}, Example 5.25), the indecomposable maximal Cohen-Macaulay modules of $M$ are $R$ and $I$.  That is, $R$ has finite type.  Thus by (\cite{Y}, Theorem 4.22), $R$ is an isolated singularity.  Moreover, since $R$ is of finite type, $R\ds I$ is an additive generator for $\mcm R$, so that $\End_R(R\ds I)^{\text{op}}$ has finite global dimension by (\cite{G}, Theorem 6).  Now $I$ has height one, so that $\hm_R(I, I)\cong R$ by Lemma \ref{norm}.  Moreover, as $I$ is maximal Cohen-Macaulay, we have $\hm_R(I,R) \cong R$ by Lemma \ref{grade}.  Thus $\End_R(R\ds I)$ is isomorphic to the subring $\left(\begin{array}{ c c }
     R& R\\
   I & R
  \end{array} \right)$ of $\mathbb M_2(R)$.  By (\cite{pg}, Corollary 2.8), there is an isomorphism 
$$\aut_R(R\ds I)_\ab \cong K_1^C(R) \ds K_1^C(R/I) = R^* \ds k[[t^2]]^*$$ 
Thus if $\mfm$ denotes the maximal ideal of $R$, we have 
\begin{eqnarray}
R^*\ds k[[t^2]]^* & \cong &  k^* \ds 1+\mfm \ds k[[t^2]]^*  \nonumber \\
   &\cong & k^* \ds k[[t^2]][[s^2, st, t^2]]^*  \nonumber \\
   & = & k^* \ds R^* \nonumber
\end{eqnarray}
\end{proof} 

\subsection{Generalities for Reduced Hypersurface Singularities}\label{ss45}

Before we prove parts (d) and (e) of Proposition \ref{mainab}, we discuss another route for computing the group $\aut_R(L)_\ab$ that we plan to utilize for the proof.  We begin with another aside on noncommuatative algebra.  A ring $A$ with Jacobson radical $J(A)$ is said to be \textit{semiperfect} if $A$ is semilocal and idempotents of $A/J(A)$ lift to idempotents of $A$.  We assume that $\mcm R$ contains an $n$-cluster tilting object $L$ of the form $L_0\ds L_1\ds\cdots\ds L_t$ and $L_0, L_1,\ldots, L_t$ are pairwise non-isomorphic and indecomposable.  As $\End_R(L_i)$ is local for all $i$, it is the case that $\Lambda = \End_R(L)^{\text{op}}$ is semiperfect by (\cite{Lam}, Theorem 23.8) (noting that $\Lambda$ is semiperfect if and only if $\Lambda^{\text{op}}$ is semiperfect).  In particular, if $R$ is a $k$-algebra, the characteristic of $k$ is not two, then by (\cite{L}, Theorem  2), there is an isomorphism 
$$K^C_1(\Lambda) \cong \Lambda^*_\ab = \aut_R(L)_\ab$$ 
Since $\Lambda$ is semiperfect, with the above isomorphism, we can utilize (\cite{pg}, Theorem 2.2) to obtain an isomorphism 
$$\aut_R(L)_\ab \cong K^C_1(\Lambda)\cong \left ( \bds_{i=0}^t \aut_R(L_i)\right)\textbf{\bigg/}HC$$
Where $C$ is the subgroup of $\bds_{i=0}^t \aut_R(L_i)$ generated by all elements of the form 
$$(1+\alpha_i\beta_i)(1+ \beta_i\alpha_i)^{-1}$$
with $\alpha_i, \beta_i\in \End_R(L_{i-1})$ such that $1 + \alpha_i \beta_i\in \aut_R(L_{i-1})$, and $H$ is the subgroup generated by all elements of the form 
$$(1+\alpha_{ij}\alpha_{ji})(1 + \alpha_{ji}\alpha_{ij})^{-1}$$
with $\alpha_{ij}\in \hm_R(L_{i-1}, L_{j-1})$, $i\neq j$, and $1+\alpha_{ij}\alpha_{ji}\in \aut_R(L_{i-1})$.  

\vs 

However each $\alpha_{ij}\alpha_{ji}$ is never an automorphism when $i\neq j$, since otherwise $L_{i-1}$ would be a direct summand of $L_{j-1}$ (see (\cite{H}, Lemma 9.3).  Since each of the rings $\End_R(L_{i-1})$ are local, this implies that $1+\alpha_{ij}\alpha_{ji}\in \aut_R(L_{i-1})$ \textit{for all} $i\neq j$.  

\vs 

We now continue with the proof of (d) of Proposition \ref{mainab}.  

\subsection{Reduced Hypersurface Singularities in Dimension One}\label{sss35}

Our aim here is to prove (d) of Proposition \ref{mainab}.  Here, $k$ is an algebraically closed field of characteristic not two and $S = k[[x, y]]$, $R$ is the ring $S/fS$ with $f\in (x, y)$ is such that in its prime factorization, $f = f_1\cdots f_n$ we have $(f_i)\neq (f_j)$ for $i\neq j$, $f_i\notin (x, y)^2$, $(f_i, f_{i+1}) = (x, y)$.  Then if $S_i = S/(f_1\cdots f_i)S$ and $L := S_1\ds\cdots\ds S_n$, we have $\aut_R(L)_\ab  \cong \ol{R}^*$, where $\ol{R}$ is the integral closure of $R$ in its total quotient ring.  We first prove a useful lemma.  

\begin{lemma}\label{dim1}
With notation as above, we have 
\begin{equation*}\hm_R(S_j, S_i) \cong \left\{\begin{array}{lr}
(f_{j+1}\cdots f_i)/(f_1\cdots f_i)&  j < i\\
S_i & i\leq j\end{array}\right.
\end{equation*}
\end{lemma}  

\begin{proof} 
The isomorphisms 
$$\hm_R(S_j, S_i) \cong \hm_R(R/(f_1\cdots f_j), R/(f_1\cdots f_i)) \cong (0 :_{R/(f_1\cdots f_i)} (f_1\cdots f_j)) $$   
make the statement clear.  
\end{proof} 

We now proceed with the proof of (d) of Proposition \ref{mainab}.  
\begin{proof} 

Now by (\cite{hc}, 4.7), $L$ is a $2$-cluster-tilting object for $\mcm R$.  As $\Lambda := \End_R(L)^{\text{op}}$ has finite global dimension by Theorem \ref{dn}, the remarks of Subsection \ref{ss45} give 
$$\aut_R(L)_\ab \cong \left(\bds_{i=1}^n \aut_R(S_i)\right)/HC$$ 
Where $C$ is the subgroup of $\bds_{i=1}^n \aut_R(S_i)$ generated by all elements of the form $(1+\alpha_i\beta_i)(1+\beta_i\alpha_i)^{-1}$ such that $ \alpha_i,\beta_i\in \End_R(S_i)$ and $1+\alpha_i\beta_i\in \aut_R(S_i)$.  By Lemma \ref{dim1}, $\End_R(S_i) = S_i$, so that $C$ is trivial and $\aut_R(L)_\ab \cong \left(S_1^*\ds \cdots \ds S_n^*\right)/H$.  We now describe the subgroup $H$.  Again by the remarks in subsection \ref{ss45}, $H$ is the subgroup generated by all elements of the form 
$$(1+\alpha_{ij}\alpha_{ji})(1+\alpha_{ji}\alpha_{ij})^{-1}$$
where $\alpha_{ij}\in\hm_R(S_j, S_i)$,  $\alpha_{ji}\in\hm_R(S_i, S_j))$, and $i\neq j$.  In fact, we can consider the subgroup generated by such elements where $i < j$.  We note $\alpha_{ij}\alpha_{ji}\in\End_R(S_i) = S_i$ and $\alpha_{ji}\alpha_{ij}\in\End_R(S_j) = S_j$.  Utilizing Lemma \ref{dim1}, we can give a more concise description of $H$ as follows (note $i < j$).  The subgroup $H$ is generated by the elements $h_{ij}(s)$, which we now describe:   

\vs 

(i)  the $i$th entry of $h_{ij}(s)$ is the image of an element $s\in 1+ (f_{i+1}\cdots f_j)\subset S$ in the unit group $S_i^*$; 

\vs

(ii) the $j$th entry of $h_{ij}(s)$ is the image of $s^{-1}$, with $s$ from (i) in the unit group $S_j^*$; 

\vs

(iii)  $h_{ij}(s)$ is trivial elsewhere.  

\vs

Let $H_{i, j}$ be the subgroup of $H$ generated by the $h_{ij}(s)$, with $s$ defined above.  We have $H = \ds_{i < j} H_{i, j}$.  By projecting onto the $j$th coordinate, it is easy to see $H_{i, j}$ is isomorphic to the subgroup $1 + (f_{i+1}\cdots f_j)$ of $S_j^*$.  For $1\leq i < n$, we call the subgroup $H_{i, i+1}\ds\cdots \ds H_{i, n}$ of $H$ the \textit{$i$th layer} of $H$.  It is easy to see that $S_1^*\ds \cdots \ds S_n^*$ modulo the direct sum of the first $m$ layers of $H$ is 
$$\bds_{u = 1}^{m+1} (S/f_u S)^* \ds \bds_{v = m+2}^n (S/(f_{m+1}\cdots f_v)S)^*$$ 
As $H$ is the direct sum of its $n-1$ layers of $H$, we see that $S_1^*\ds \cdots \ds S_n^*$ modulo $H$ is just 
$$(S/f_1S)^*\ds\cdots\ds (S/f_nS)^*$$
And this is just $\ol{R}^*$.  

\end{proof} 

\subsection{Reduced Hypersurface Singularities in Dimension Three}\label{sss45}

Our aim here is to prove (e) of Proposition \ref{mainab}.  Keep notation as in Subsection \ref{sss35} with the exception that we now require $k$ be an algebraically closed field of characteristic zero.  Set $S' = k[[x, y, u, v]]$ and $R' = S'/(f+uv)S'$.  Then a $2$-cluster tilting object for $\mcm R$ is given by $L := U_1\ds\cdots\ds U_n$, with $U_i = (u, f_1\cdots f_i)\subset R'$ (see Section \ref{sec4}).  Then we aim to show $\aut_R(L)_\ab \cong R'^* \ds k[[w, z]]^{*\ds (n-1)}$.  In order to understand $\aut_R(L)_\ab$, we first need to understand the structure of the modules $\hm_{R'}(U_i, U_j)$, so that we are able to use the remarks of Subsection \ref{ss45} to compute $\aut_R(L)_\ab$.  This is the first step we make below.  

\begin{proposition}\label{dim3}

Let $R'$ and $U_i$ be as above.  Then 
\begin{displaymath}
   \hm_{R'}(U_i, U_j) \cong \left\{
     \begin{array}{lr}
       U_j & j < i \ \\
       R' & i \leq j 
     \end{array}
   \right.
\end{displaymath} 
\end{proposition} 

\begin{proof} 
Now $R'$ is Gorenstein of dimension three and an isolated singularity.  Since $U_i$ is an ideal of $R'$ of height one, we may apply Lemma \ref{norm} to see that $\hm_{R'}(U_i, U_i)\cong R'$ for all $i$.  If $i\neq j$, (\cite{int}, Lemma 2.4.3) says we may identify $\hm_{R'}(U_i, U_j)$ with the the $R'$-submodule $\frac{1}{u}(uU_j :_{R'} U_i)$ of the quotient field of $R'$.  Now $(uU_j :_{R'} U_i)$ is nonzero and $(uU_j :_{R'} U_i)\subset U_j$, hence $(uU_j :_{R'} U_i)$ has height one.  Let $\mfp$ be a minimal prime of $(uU_i :_{R'} U_j)$.  As $R'$ is an isolated singularity, $R'_\mfp$ is a discrete valuation ring.  Write $R'_\mfp = A$ and let $\mu$ be a generator for the maximal ideal of $A$.  Suppose $u$ maps to $c\mu^a$, with $a > 0$ and $c\in A^*$.  Write $(U_i)_\mfp = \mu^{n_i}A$ and $(U_j)_\mfp = \mu^{n_j}A$, with $n_j, n_i$ nonnegative integers.  Then 
$$(uU_j :_{R'} U_i)_\mfp = (\mu^{a+n_j} :_A \mu^{n_i})$$ 
If $i \leq j$, then $U_j \subseteq U_i$, hence $n_j\geq n_i$.  We have 
$$(\mu^{a+n_j} :_A \mu^{n_i}) = \mu^{a + n_j -n_i}A\subset \mu^aA$$
Thus $(uU_j :_{R'} U_i)_\mfp = (uR')_\mfp$.  In this case, $(uU_i :_{R'} U_j) = uR'$, so that $\hm_{R'}(U_i, U_j) \cong R'$.  

\vs 

Now if $j < i$, then $U_i\subset U_j$ and $n_i \geq n_j$.  Notice $u\in U_i$, so that $a\geq n_i$.  We have 
$$(\mu^{a+n_j} :_A \mu^{n_i}) = \mu^{a + n_j -n_i} = \mu^{a-n_i}(U_j)_\mfp$$
And $\mu^{a-n_i}A = (\mu^a :_A \mu^{n_i}) = (u:_{R'} U_i)_\mfp$.  We have $(u:_{R'} U_i) = (u:_{R'} f_1\cdots f_i)$.  Thus $(uU_j :_{R'} U_i) = (u:_{R'} f_1\cdots f_i)U_j$.  When $i = n$, $(f_1\cdots f_n)R' = (uv)R'$, hence $(u :_{R'} f_1\cdots f_n) = R'$.  This gives $U_j = (uU_j :_{R'} U_n)$, hence there is an isomorphism of $R'$-modules $\hm_{R'}(U_n, U_j)\cong \frac{1}{u}U_j\cong U_j$.  

\vs 

To analyze the ideal $(u :_{R'} f_1\cdots f_i)$ for $i < n$, note that $f_{i+1}\cdots f_n\in (u :_{R'} f_1\cdots f_i)$ and that the ideals $(u, f_{i+1})R',\ldots, (u, f_n)R'$ are prime.  In particular, the minimal primes of $(u :_{R'} f_1\cdots f_i)$ are $(u, f_{i+1})R',\ldots, (u, f_n)R'$.  Let $\mfq$ denote the prime ideal $(u, f_s)R'$, with $i+1\leq s\leq n$.  Then $(f_1\cdots f_i)R'_\mfq = R'_\mfq$, as $f_1,\ldots, f_i\notin \mfq$ and hence $(u :_{R'} f_1\cdots f_i)_\mfq = (uR')_\mfq$.  Thus $(u :_{R'} f_1\cdots f_i) = uR'$, so that $(u:_{R'} f_1\cdots f_i)U_j = uU_j$, and hence $\hm_{R'}(U_i, U_j)\cong U_j$.  This gives the result.  
\end{proof} 

We now proceed with the proof of (e) of Proposition \ref{mainab}.  

\begin{proof} 
By  (\cite{i}, Theorem 4.17), $\End_{R'}(L)^{\text{op}}$ has finite global dimension, so the remarks of Subsection \ref{ss45} are applicable.  Thus there is an isomorphism:  
$$\aut_{R'}(L)_\ab \cong \left(\bds_{i=1}^n \aut_{R'}(U_i)\right )/HC$$ 
Where $C$ is the subgroup of $\bds_{i=1}^n \aut_{R'}(U_i)$ generated by $(1+\alpha_i\beta_i)(1+\beta_i\alpha_i)^{-1}$ such that $ \alpha_i,\beta_i\in \End_{R'}(U_i)$ and $1+\alpha_i\beta_i\in \aut_{R'}(U_i)$.  By Proposition \ref{dim3}, $\End_{R'}(U_i) = R'$, so that $C$ is trivial, hence $\aut_{R'}(L)_\ab \cong (R'^*)^{\ds n}/H$.  We now describe the subgroup $H$.  Again by the remarks in subsection \ref{ss45}, $H$ is the subgroup generated by all elements of the form 
$$(1+\alpha_{ij}\alpha_{ji})(1+\alpha_{ji}\alpha_{ij})^{-1}$$ 
where $\alpha_{ij}\in\hm_{R'}(U_j, U_i)$,  $\alpha_{ji}\in\hm_{R'}(U_i, U_j))$, and $i\neq j$.  In fact, we can consider the subgroup generated by such elements where $i < j$.  We now give a more concise description of $H$.  Utilizing Proposition \ref{dim3}, $H$ is the subgroup of $(R'^*)^{\ds n}$ generated by the elements $h_{ij}(g)$ with $i < j$ and $g\in U_i$ such that that: 

\vs 

(i)  the $i$th entry of $h_{ij}(g)$ is $1+g$; 

\vs

(ii)  the $j$th entry of $h_{ij}(g)$ is $(1+g)^{-1}$; 

\vs

(iii)  $h_{ij}(g)$ is trivial elsewhere.  

\vs 

For fixed $i$ and $j$, let $H_{i, j}$ be the subgroup generated by the elements $h_{ij}(g)$.  Thus $H = \ds_{i < j} H_{i, j}$ and $H_{i, j} \cong 1 + U_i \subset R'^*$.  For $i < n$, we call the subgroup $H_{i, i+1} \ds H_{i, i+2}\ds\cdots\ds H_{i, n}$ the \textit{ith layer} of $H$.  As $U_n\subset U_n\subset\cdots\subset U_1$, it is easy to see that $(R'^*)^{\ds n}$ modulo the direct sum of layers $n-1, n-2,\ldots, n-i$ is isomorphic to 
$$(R'^*)^{\ds (n-i)} \ds (R'/U_{n-i})^{* \ds i}$$
Now the direct sum of layers $n-1, n-2,\ldots, 1$ is just $H$, so that we see 
$$\aut_{R'}(L)_\ab \cong R'^* \ds (R'/U_1)^{* \ds (n-1)}$$ 
Moreover, since $U_1 = (u, f_1)$ and $f_1\in (x,y)\backslash (x, y)^2\subseteq k[[x, y]]$, we see $R'/U_1\cong k[[w, z]]$, for variables $w, z$ over $k$.  Thus 
$$\aut_{R'}(L)_\ab \cong R'^* \ds k[[w, z]]^{* \ds (n-1)}$$

\end{proof} 


\section{Computing $G_1(R)$}\label{examples}
The aim of this section is to utilize Theorem \ref{thmg1} to explicitly calculate $G_1(R)$ for several hypersurface singularities.  Our results are the following:  

\begin{example}\label{ex1} 
Let $k$ be an algebraically closed field of characteristic not two.  If $n\geq 1$ and $R = k[x]/x^nk[x]$, then $G_1(R)\cong k^*$.  

\end{example}  

\begin{remark}
We note that Example \ref{ex1} follows immediately from Quillen's D\'{e}vissage Theorem (\cite{Q}, \S 5 Theorem 4), but we find the calculation illustrative of our methods as well as allowing us to generalize (\cite{H}, Example 10.2).  

\end{remark} 

\begin{example}\label{ex2} 
Let $k$ be an algebraically closed field of characteristic not two, three or five.  If $R$ is the finite-type singularity $k[[t^2, t^{2n+1}]]$ for $n\geq 0$, then $G_1(R)\cong\overline{R}^* = k[[t]]^*$;    

\end{example}  

\begin{example}\label{ex3} 
Let $k$ be an algebraically closed field of characteristic not two.  If $S = k[[x, y]]$ let $f_1,\ldots, f_n\in (x, y)$ be irreducible and $f = f_1\cdots f_n$ be such that 

\vs

    \hspace{.5 cm}(i)  $R := S/fS$ is an isolated singularity (ie. $(f_i) \neq (f_j)$) 
		
		\vs
		
		\hspace{.5 cm}(ii)  $f_i\notin (x, y)^2$ for all $i$.  
	
	\vs
	
	\hspace{.5 cm}(iii)  $(f_i, f_{i+1}) = (x, y)$.  
	
	\vs
	
Then $G_1(R)\cong \ints^{\ds (n-1)}\ds \overline{R}^*$ (where $\ol R$ is the integral closure of $R$);      

\end{example} 

\begin{remark}\label{remark1}

We note here that Examples \ref{ex2} and \ref{ex3} follow from the use of more classical technology.  Let $A\sbe B$ be an inclusion of commutative Noetherian such that $B$ is a module-finite extension of $A$.  Let $I\sbe A$ and $J\sbe B$ be ideals such that $IB\sbe J$.  Set $X = \text{Spec}(A)\setminus \text{Spec}(A/I)$, $Y = \text{Spec}(B)\setminus \text{Spec}(B/J)$, and suppose that the induced morphism of schemes $X\ra Y$ is an isomorphism.  Then Quillen's Localization Theorem (\cite{Q}, Theorem 5) yields long exact sequences 
$$G_i(A/I) \ra G_i(A) \ra G_i(X) \ra G_{i-1}(A/I) \ra\cdots$$
and
$$G_i(B/J) \ra G_i(B) \ra G_i(Y) \ra G_{i-1}(B/J) \ra\cdots$$
Where we note that for a Noetherian scheme $\mathcal S$,  $G_i(\mathcal S)$ is the $i$th Quillen $K$-group of the category of coherent $\mathcal O_\mathcal{S}$-modules.  Now restriction of scalars induces the following commutative diagram 
\begin{diagram}
\cdots &\rTo& G_i(B/J) &\rTo^{\delta_{B/J}} & G_i(B) &\rTo^{\delta_B}& G_i(Y) &\rTo^{\delta_Y} &G_{i-1}(B/J) &\rTo&\cdots\\
&&\dTo_{\varepsilon_{B/J}} && \dTo_{\varepsilon_B} &&\dTo_{\varepsilon_Y} &&\dTo \\ 
\cdots &\rTo& G_i(A/I) &\rTo_{\delta_{A/I}} & G_i(A) &\rTo_{\delta_A} & G_i(X) &\rTo_{\delta_X}& G_{i-1}(A/I)&\rTo&\cdots \\
\end{diagram}
Where we note that $\veps_Y$ is an isomorphism.  Some rather involved but straightforward diagram chasing gives a Mayer-Vietoris-like sequence of $G$-groups that we denote by $(\star)$:    
\begin{equation*}\cdots\ra G_i(B/J) \overset{\alpha}{\ra} G_i(A/I) \oplus G_i(B)  \overset{\beta}{\ra} G_i(A) \overset{\gamma}{\ra}  G_{i-1}(B/J)\ra   \cdots\end{equation*} 
Where $\alpha = \left(\begin{matrix} \veps_{B/J}\\ \delta_{B/J}\end{matrix}\right)$, $\beta = (\delta_{A/I},-\veps_B)$, and $\gamma = \delta_Y \veps_Y^{-1}\delta_A$.  

\vs 

To see how we can recover the claims in Example \ref{ex2} using $(\star)$, let $I = (t^2, t^{2n+1})\sbe k[[t^2, t^{2n+1}]] = A$ and $J = (t) \sbe B = k[[t]]$.  We note that $B$ is a module-finite extension of $A$ and $ \text{Spec}(A)\setminus \text{Spec}(A/I) = \stl (0)\str \cong \text{Spec}(B)\setminus \text{Spec}(B/J) = \stl (0)\str$, so the above requirements are met.  Using $(\star)$, we obtain a long exact sequence 
$$G_i(k) \overset{\alpha}{\ra} G_i(k) \ds G_i(B) \overset{\beta}{\ra} G_i(A) \overset{\gamma}{\ra} G_{i-1}(k)\overset{\alpha'}{\ra} G_{i-1}(k) \ds G_{i-1}(B)$$ 
Where $\alpha' = \left(\begin{matrix} \veps'_{B/J}\\ \delta'_{B/J}\end{matrix}\right)$.  As $I = J\cap A$, the induced map $A/I \ra B/J$ is an isomorphism, so that $\veps_{B/J}$ is an isomorphism.  In particular, we obtain the exact sequence 
$$0\ra G_i(B)/\im(\delta_{B/J}) \ra G_i(A) \overset{\gamma}{\ra} G_{i-1}(k)\overset{\alpha'}{\ra} G_{i-1}(k) \ds G_{i-1}(B)$$ 
Now $\delta_{B/J} = \delta'_{B/J}  = 0$, so that we easily obtain from the above exact sequence $G_i(B) \cong G_i(A)$.  In particular, $G_1(A) = G_1(k[[t^2, t^{2n+1}]] \cong G_1(B) = G_1(k[[t]]) = k[[t]]^*$.  We note, unlike the restriction on the characteristic we encounter using Theorem \ref{thmg1} below, this holds regardless of the characteristic.  

\vs 

To see how we can recover the claims in Example \ref{ex3} using $(\star)$, we let $A = S/(f_1\cdots f_n)$ with $I$ the maximal ideal of $A$ and $B = S/(f_1) \ds\cdots\ds S/(f_n)$ with $J = \mfm_1\ds\cdots\ds \mfm_n$, where $\mfm_i$ is the maximal ideal of the local ring $S/(f_i)$.  It is easy to see that $B$ is a module-finite extension of $A$.  Moreover, $B$ is also the integral closure of $A$ in its total quotient ring.  We also have 
$$X = \text{Spec}(A)\setminus \text{Spec}(A/I) = \stl (f_i)/(f_1\cdots f_n) : 1\leq i\leq n\str$$ 
$$Y = \text{Spec}(B)\setminus \text{Spec}(B/J) = \stl S/(f_1)\ds\cdots\ds \underbrace{0}_i\ds\cdots\ds S/(f_n): 1\leq i\leq n\str $$ 
From the above, it is clear the induced map $X\ra Y$ is given by $(f_i)/(f_1\cdots f_n) \mapsto S/(f_1)\ds\cdots\ds \underbrace{0}_i\ds\cdots\ds S/(f_n)$, hence is clearly an isomorphism.  From $(\star)$, we obtain a long exact sequence 
$$G_i(k)^{\ds n} \overset{\alpha}{\ra} G_i(k) \ds G_i(B) \overset{\beta}{\ra} G_i(A)\overset{\gamma}{\ra} G_{i-1}(k)^{\ds n} \overset{\alpha'}{\ra} G_{i-1}(k)\ds G_{i-1}(B)$$ 
Now the first component of $\alpha$ and $\alpha'$ is the summing map.  Moreover, $\delta^{B/J} = \delta'_{B/J} = 0$ (where $\delta'_{B/J}$ is the second  component of $\alpha'$) so that we obtain an exact sequence 
$$0 \ra G_i(B) \ra G_i(A) \ra G_{i-1}(k)^{\ds (n-1)} \ra 0$$ 
As $f_j\notin (x,y)^2$, $S/(f_j)$ is regular, hence $G_i(B) = K_i(B)$.  Specializing to $i = 1$, we obtain the exact sequence 
$$0 \ra K_1(B) \ra G_1(A) \ra \ints^{\ds (n-1)}\ra 0$$ 
Since the above sequence splits, we obtain $G_1(A) \cong K_1(B) \ds \ints^{n-1}$.  As $K_1(S/(f_j)) = (S/(f_j))^*$, it is easy to see that $K_1(B) = B^*$.  This gives Example \ref{ex3}.  We note that (iii) in the hypothesis of Example \ref{ex3} is not needed, so this is more general.  

\vs 

While we can recover Examples \ref{ex2} and \ref{ex3} from these methods, we find that our work expands on calculations  given in (\cite{v2}, Section 7.3 and Proposition 7.26) and (\cite{H}, Example 10.5).  

\end{remark} 

Lastly, our results that do not follow from our arguments in Remark \ref{remark1} are the following:  

\begin{proposition}\label{thmg2}

Let $k$ be an algebraically closed field of characteristic not two.

\vs 

(a)  If $R = k[[s^2, st, t^2]]$, then $G_1(R)\cong R^*$. 

\vs 

(b)  Suppose $k$ has characteristic zero, $S = k[[x, y]]$, $S' = k[[x, y, u, v]]$, and $R' = S'/(f+uv)S'$, where $f = f_1\cdots f_n\in S = k[[x, y ]]$ is such that 

\vs

    \hspace{.5 cm}(i)  $S/fS$ is an isolated singularity (ie. $(f_i) \neq (f_j)$) 
		
		\vs
		
		\hspace{.5 cm}(ii)  $f_i\notin (x, y)^2$ for all $i$.  
	
	\vs
	
	\hspace{.5 cm}(iii)  $(f_i, f_{i+1}) = (x, y)$.  
	
	\vs
	
Then $G_1(R')\cong \ints^{\ds (n-1)} \ds \left(R'^* \ds k[[w, z]]^{* \ds (n-1)}\right)/\Xi$, with $\Xi$ the subgroup of Definition \ref{defsha} and $w, z$ variables over $k$.  

\end{proposition}

\subsection{The $n$-Auslander-Reiten Matrix}\label{ss16}

Before we can use Theorem \ref{thmg1} to perform the calculation of Examples \ref{ex1}, \ref{ex2}, \ref{ex3}, and prove Proposition \ref{thmg2}, we need to explicitly define the free group $\mathcal H$ occurring in the decomposition of $G_1(R)$ in Theorem \ref{thmg1}.  Our assumptions are as usual and we also require that $R$ is a $k$-algebra and $k$ is algebraically closed of characteristic not two.  We use $L = L_0\ds L_1\ds\cdots\ds L_t$ to denote an $n$-cluster tilting object of $\mcm R$ such that $\Lambda = \End_R(L)^{\text{op}}$ has finite global dimension.  We assume that $L_0, L_1,\ldots, L_t$ are the pairwise non-isomorphic summands of $L$ (each occurs with multiplicity one in the decomposition of $L$) and that $L_ 0 = R$.  Let $I = \stl L_0, L_1,\ldots, L_t\str$ and $I_0 =  I\backslash \stl L_0\str$.  We set $\C = \Ad L$. Recall, for $j > 0$, there is an exact sequence, called the $n$-Auslander-Reiten ending in $L_j$ (see Definition \ref{narhom}):  
$$0\ra C^j_n\ra  \cdots \ra C^j_0 \ra L_j\ra 0$$ 
with $C^j_0, C^j_1,\ldots, C^j_n\in \C$.  Given $N\in \C$, let $\#(i, N)$ be the number of $L_i$-summands ($0\leq i\leq t$) appearing in a decomposition of $N$ into the indecompsables $R$-modules $L_0, L_1,\ldots, L_t$.  Following \cite{v2}, we define a $(t+1)\times t$ integer matrix $T$ whose $ij$-th entry is $\#(i,L_j) + \sum_{u=0}^n(-1)^{u+1}\#(i, C^j_u) = \delta_{ij} + \sum_{u=0}^n(-1)^{u+1} \#(i, C^j_u)$ (note that $T$ has a $0$th row but no $0$th column).  As $G_0(k) = \ints$ and $G_0(\Lambda) = \ints^{\ds t}$, Theorem \ref{viraj} gives us a map $\ints^{\ds t}\ra \ints^{\ds (t+1)}$.  It is shown in (\cite{v2}, Section 7.2) that $T$ defines the map $\ints^{\ds t}\ra \ints^{\ds (t+1)}$ afforded to us by Theorem \ref{viraj}.  We call $T$ the \textit{$n$-Auslander-Reiten matrix} or the \textit{$n$-Auslander-Reiten homomorphism}.  Moreover, this is the same map given in Theorem \ref{k0mod} when $\mcm R$ has a $1$-cluster tilting object.  For our needs, recall Theorem \ref{thmg1} says $G_1(R) = \mathcal H\ds \aut_R(L)_\ab/\Xi$, so that now $\mathcal H = \ker(T)$.   

\vs

We make a useful observation before our computations.  

\begin{lemma}\label{mult}

Let $1\leq i_1 < \cdots < i_h \leq t$ and $L' = L_{i_1}^{\ds q} \ds \cdots \ds L_{i_h}^{\ds q}$ with $q > 0$.  Then for $a\in R^*$, we have $\det_{\Lambda^{\text{op}}}(\wt{a1_{L'}}) = \alpha $, where $\alpha \in (\Lambda^{\text{op}})^*$ is given by $\text{diag}(1_{L_0}, \ldots, a^q1_{L_{i_1}},\ldots, a^q1_{L_{i_h}}, \ldots, 1_{L_t})$

\end{lemma} 

\begin{proof} 

From Remark \ref{situations}, we see that $\wt{a1_{L'}}:  L^{\ds q} \ra L^{\ds q}$ is the map $e1_{L^{\ds q}}$, where $e\in (\Lambda^{\text{op}})^*$ is given by $\text{diag}(1_{L_0}, \ldots, a1_{L_{i_1}},\ldots, a1_{L_{i_h}}, \ldots, 1_{L_t})$.  Now recall the injection $GL_1(\Lambda^{\text{op}}) = (\Lambda^{\text{op}})^* \hookrightarrow GL_q(\Lambda^{\text{op}})$ that takes $\gamma\in(\Lambda^{\text{op}})^*$ to the automorphism $d_1(\gamma) = \text{diag}(\gamma, 1_L,\ldots, 1_L)\in GL_q(\Lambda^{\text{op}})$.  Now 
$$(e1_{L^{\ds q}})^{-1}\cdot d_1(\alpha) = e^{-1}1_{L^{\ds q}}\cdot d_1(\alpha)= \beta_1\cdots \beta_{q-1}$$
where $\beta_u := d_1(e)d_{u+1}(e^{-1})\in GL_q(\Lambda^{\text{op}})$.  Consider the element $\gamma_u := \text{diag}(e, 1,\ldots, 1)$ in $GL_u(\Lambda^\text{op})$.  Then, by slight abuse of notation, the matrix $\delta_u := \text{diag}(\gamma_u, \gamma_u^{-1})$ in $GL_{2u}$ is in the commutator subgroup of $GL_{2u}(\Lambda^\text{op})$ by  (\cite{J}, Corollary 2.1.3).  Thus by (\cite{J}, Proposition 2.1.4), each $\delta_u$ is in the commutator subgroup of $GL(\Lambda^\text{op})$.  In $GL_(\Lambda^\text{op})$, either $\beta_u$ is the image of $\delta_u$ under the injection $GL_{2u}(\Lambda^\text{op})\hookrightarrow GL_q(\Lambda^\text{op})$, or $\delta_u$ is the image of $\beta_u$ under the injection $GL_{q}(\Lambda^\text{op})\hookrightarrow GL_{2u}(\Lambda^\text{op})$.  In either case we see that $\beta_u$ is in the commutator subgroup of $GL(\Lambda^\text{op})$.  Hence $e1_{L^{\ds q}}\equiv d_1(\alpha)$ in $GL(\Lambda^{\text{op}})_\ab$.  

\vs 

Since $\det_{\Lambda^{\text{op}}}:  GL(\Lambda^{\text{op}})_\ab \ra (\Lambda^{\text{op}})^*_\ab$ is the inverse of the isomorphism induced by the map
$$(\Lambda^{\text{op}})^* = GL_1(\Lambda^{\text{op}}) \hookrightarrow GL(\Lambda^{\text{op}})\twohead GL(\Lambda^{\text{op}})_\ab$$ 
We see that $\det_{\Lambda^{\text{op}}}(e1_{L^{\ds q}}) = \alpha$. 
\end{proof}   

\vs 

We note $R$ has finite type if and only if $R$ has a $1$-cluster tilting object $M$.  In this case, $\mcm R = \Ad M$ and $M = M_0\ds M_1\ds\cdots\ds M_t$, with $M_ 0 =R$ and $M_1,\ldots, M_t$ the non-free indecomposable maximal Cohen-Macaulay $R$-modules.  For $j > 0$, we call the $1$-Auslander-Reiten sequence ending in $M_j$ the Auslander-Reiten sequence ending in $M_j$ and the $1$-Auslander-Reiten matrix is referred to as the Auslander-Reiten matrix.  The Auslander-Reiten matrix is a classical invariant and we denote it by $\Upsilon$.  

\vs 

We now make use of Theorem \ref{thmg1} perform the calculations of Examples \ref{ex1}, \ref{ex2}, \ref{ex3}, and prove Proposition \ref{thmg2}.  That is, in the context of Theorem \ref{thmg1}, we must compute the kernel of the $n$-Auslander-Reiten homomorphism and the subgroup $\Xi$ of $\aut_R(L)_\ab$.  In each computation, it will also be clear that $R$ is a $k$-algebra.  

\subsection{Truncated Polynomial Rings in One Variable}\label{sss16}

Our aim here is to utilize Theorem \ref{thmg1} to perform the calculation in Example \ref{ex1}.  That is, if $R = k[x]/x^nk[x]$, then $G_1(R) \cong k^*$.

\begin{proof} 

For $n = 1$, $R = k$, so $G_1(R) = K_1(R) \cong k^*$. 

\vs

We now suppose $n\geq 2$.  Let $\mfm$ denote the maximal ideal $xR$.  By the proof of (\cite{GR}, Theorem 3.3), $R$ has finite type and the indecomposable $R$-modules are given by $R, \mfm, \ldots, \mfm^{n-1}$.  Let $M$ be the $R$-module given by $R\ds \mathfrak m\ds\cdots\ds\mathfrak m^{n-1}$ and denote its endomorphism ring by $E$.  Using (\cite{Y}, Lemma 2.9 ) it is not hard to see the Auslander-Reiten sequences ending in $\mfm^j$ are given by  
\begin{equation} 
0\ra \mathfrak m^j\ra \mathfrak m^{j-1}\ds\mathfrak m^{j+1}\ra \mathfrak m^j\ra 0\hspace{1 cm}(1\leq j\leq n-1)\tag{$\star$}\end{equation} 
Thus for $1\leq j\leq n-2$, $\Upsilon$ has its $j$th column given by $\left(0,\ldots, -1, 2, -1,\ldots, 0\right)^T$, where $-1, 2$ and $-1$ occur in rows $j-1, j$ and $j+1$, respectively.  And the $(n-1)$st column is given by $\left(0,\ldots, 0, -1, 2\right)^T$.  It is easy to see that $\Upsilon$ is injective.  

\vs

We compute the subgroup $\Xi$ of $E^*_\ab$ from Definition \ref{defsha}.  By $(\star)$ and Lemma \ref{mult}, the subgroup $\Xi$ is generated by elements 
$$\xi_{a, j} = (\widetilde{a^21_{\mathfrak m^j}})\cdot (\widetilde{a^{-1}1_{\mathfrak m^{j-1}}\ds a^{-1}1_{\mathfrak m^{j+1}}}) \hspace{1 cm}(1\leq j\leq n-2)$$
$$\xi_{a, n-1} = (\widetilde{a^21_{\mathfrak m^{n-1}}})\cdot (\widetilde{a^{-1}1_{\mathfrak m^{n-2}}})$$

\vs

where $a$ runs over $k^*$.  Again by Lemma \ref{mult}, we have  
$$\xi_{a, j} = \diag(1_R,\ldots, a^{-1}1_{\mfm^{j-1}}, a^21_{\mathfrak m^j}, a^{-1}1_{\mathfrak m^{j+1}}, \ldots, 1_{\mathfrak m^{n-1}})$$ 
$$\xi_{a, n-1} = \diag(1_R,\ldots, \ldots, a^{-1}1_{\mfm^{n-2}}, a^21_{\mfm^{n-1}})$$

By (a) of Proposition \ref{mainab}, there is an isomorphism $E^*_\ab\cong (k^*)^{\ds n}$.  We regard  $\Xi$ as a subgroup of $(k^*)^{\ds n}$ and abuse notation to write  
$$\xi_{a, j} = (1, \ldots, a^{-1}, a^2, a^{-1},\ldots 1)$$
$$\xi_{a, n-1} = (1,\ldots, a^{-1}, a^2)$$
Where $a^{-1}, a^2$ and $a^{-1}$ occur in $\xi_{a, j}$ at positions $j$, $j+1$ and $j+2$, respectively.  Let $\Psi:  (k^*)^{\oplus n}\ra k^*$ be the map such that $\Psi(a_1,\ldots, a_n) = a_1^na_2^{n-1}\cdots a_n$.  Then $\Psi$ is a surjective group homomorphism such that $\Xi \subseteq\text{ker}(\Psi)$.  Let $(a_1,\ldots, a_n)\in \text{ker}(\Psi)$, so that $a_1^na_2^{n-1}\cdots a_n = 1$.  Then \linebreak$(a_1,\ldots, a_n) = \zeta_1\cdots\zeta_{n-1}$, where 
$$\zeta_j = \displaystyle\prod_{v=j}^{n-1}\xi_{a_j^{j-v- 1}, v} $$
Thus $\Psi$ induces an isomorphism $\overline\Psi:  (k^*)^{\oplus n}/\Xi \ra k^*$, hence $G_1(R)\cong k^*$ by Theorem \ref{thmg1}.  

\end{proof} 

\subsection{Singularity of Type $A_{2n}$ in Dimension One}\label{ss26}

The ADE singularity of type $A_{2n}$ is given by the ring $R = k[[t^2, t^{2n+1}]]$.  Here we utilize Theorem \ref{thmg1} to perform the calculation in Example \ref{ex2}.  That is, if the characteristic of $k$ is not $2, 3$ or $5$, then $G_1(R)\cong\overline{R}^* = k[[t]]^*$.  

\begin{proof} 

For $n = 0$, $R = k[[t]]$, a regular local ring, so that $G_1(R)\cong K_1(R) \cong R^* =  k[[t]]^*$ by Quillen's Resolution Theorem (\cite{Q}, \S Theorem  3).  

\vs

We now suppose $n\geq 1$.  Now $R$ has finite type and the indecomposable maximal Cohen-Macaulay $R$-modules are $R_j = k[[t^2, t^{2(n-j)+1}]]$, with $j = 0,\ldots, n$ by (\cite{Y}, Proposition 5.11).  Thus $M$ is the $R$-module $R_0\ds R_1\ds\cdots\ds R_n$ ($R_0 = R$).  Let $E$ be the endomorphism ring of $M$.  By the proof of (\cite{Y}, Proposition 5.11), the Auslander-Reiten sequence ending in $R_j$ is 
$$\begin{array}{cc}
0\ra R_j\ra R_{j-1}\ds R_{j+1} \ra R_j\ra 0 & (1\leq j < n)\\ 
\\
0\ra R_n\ra R_{n-1}\ds R_n \ra R_n\ra 0 & 
\end{array} $$
Thus the Auslander-Reiten matrix $\Upsilon$, for $1\leq j\leq n-1$, has $j$th column given by $\left(0,\ldots, -1, 2, -1, \ldots, 0\right)^T$, with $-1, 2$ and $-1$ occur in rows $j-1, j$ and $j+1$, respectively.  The $n$th column is given by $\left(0,\ldots, 0, -1, 1\right)^T$.  Now $\Upsilon$ is clearly injective, hence $G_1(R)\cong E^*_\ab/\Xi$ by Theorem \ref{thmg1}.  We calculate the subgroup $\Xi$ occurring of Definition \ref{defsha}.  By Lemma \ref{mult}, the subgroup $\Xi$ is generated by the elements 
$$\xi_{a, j} = \wt{a^21_{R_j}} \cdot \wt{a^{-1}1_{R_{j-1}}\ds a^{-1}1_{R_{j+1}}} \hspace{1 cm}(1\leq j < n)$$
$$\xi_{a, n} = \wt{a^21_{R_n}} \cdot \wt{a^{-1}1_{R_{n-1}}\ds a^{-1}1_{R_n}}$$
Where $a$ runs over $k^*$.  We abuse notation and regard $\Xi$ as a subgroup of $(k^*)^{\ds (n+1)}$.  We compute compute $(k^*)^{\oplus (n+1)}/\Xi$, viewing the elements of $\Xi$ as a row vectors in $(k^*)^{\oplus (n+1)}$.   Hence the elements that generate $\Xi$ are given by 
$$\xi_{a, j} = (1,\ldots, a^{-1}, a^2, a^{-1},\ldots, 1) \hspace{1 cm}(1\leq j < n)$$
$$\xi_{a, n} = (1,\ldots, a^{-1}, a)$$
Where $a^{-1}, a^2$ and $a^{-1}$ occur in positions $j, j+1$ and $j+2$ for $1\leq j < n$.  Let $\chi:  (k^*)^{\oplus (n+1)}\ra k^*$ be given by $\chi(a_1,\ldots, a_{n+1}) = a_1\cdots a_{n+1}$.  Then $\text{ker}(\chi)$ is generated by elements of the form $(a_1,\ldots, a_{n+1})$ such that  
$$(a_1,\ldots, a_{n+1}) = (a_2^{-1}, a_2, 1,\ldots, 1)(a_3^{-1}, 1, a_3, 1, \ldots, 1)\cdots (a_{n+1}^{-1},1, 1, \ldots, a_{n+1})$$ 
We show $\Xi = \text{ker}(\chi)$.  Obviously, $\Xi\subseteq \text{ker}(\chi)$.  For the converse, it suffices to show the elements $\zeta_{a, j}= (a^{-1},1 \ldots, a,\ldots, 1)$, where $a$ is in the $j$th position and $2\leq j\leq n+1$, are in $\Xi$.  Indeed, note that $\zeta_{2, a}= \xi_{a, 1}\xi_{a, 2}\cdots\xi_{a, n}$ and for $j > 2$, we have $\zeta_{a, j}  = \zeta_{a, j-1}\xi_{a, j-1}\xi_{a, j}\cdots \xi_{a, n}$.  Thus $\text{ker}(\chi) = \Xi$ as needed.  
\vs
Combining the above and using (b) of Proposition \ref{mainab}, we have 
$$G_1(R)\cong (k^*)^{\ds (n+1)}/\Xi \ds \left(1+tk[[t]]\right) \cong k^* \ds \left(1+tk[[t]]\right) \cong k[[t]]^*$$  

\end{proof}

\subsection{Reduced Hypersurface Singularities in Dimension One}\label{sss46}

Our aim here is use Theorem \ref{thmg1} to perform the calculation in Example \ref{ex3}.  We recall Example \ref{ex3}.  We let $S = k[[x, y]]$,  $f_1,\ldots, f_n\in (x,y)\backslash (x, y)^2$, with $f_i$ irreducible, $f = f_1\cdots f_n$, $R = S/fS$ is an isolated singularity (i.e. $f_iS \neq f_jS$), and $(f_i, f_{i+1}) = (x, y)$.  Then $G_1(R)\cong \ints^{\ds (n-1)} \ds \ol{R}^*$.  
\begin{proof} 
Now $L = S_1\ds\cdots\ds S_n$, with $S_i = S/(f_1\cdots f_i)$, is a $2$-cluster tilting object in $\mcm R$.  In order to compute $G_1(R)$, we need to understand the structure of the $2$-Auslander-Reiten sequences in $\C = \Ad L$.  By (\cite{i}, Proof of Theorem 4.11) the $2$-Auslander-Reiten sequences ending in $S_j$ are 
$$0\ra S_j\ra S_{j+1}\ds S_{j-1}\ra S_{j+1}\ds S_{j-1}\ra S_j\ra 0\hspace{1 cm}(1\leq j < n)$$ 
From this and Lemma \ref{mult} it is clear that the subgroup $\Xi$ of $\aut_R(L)_\ab$ is trivial.  Moreover, from this, it is easy to see that the $2$-Auslander-Reiten matrix $T:  \ints^{\ds (n-1)}\ra \ints^{\ds n}$ is zero.  Thus by Theorem \ref{thmg1} and (d) of Proposition \ref{mainab}
$$G_1(R) \cong \ker(T) \ds \aut_R(L)_\ab\cong  \ints^{\ds (n-1)} \ds \overline{R}^*$$ 
\end{proof}

\subsection{Singularity of Type $A_1$ in Dimension Two}\label{sss36}

Our aim here is to prove (a) of Proposition \ref{thmg2}.  That is, if $R$ is the ring $k[[s^2, st, t^2]]$ then $G_1(R)\cong R^*$.  

\begin{proof} 

By (\cite{GR}, Example 5.25 and 13.21) $R$ has finite type and the indecomposable maximal Cohen-Macaulay $R$-modules are $R$ and $I = (s^2, st)$.  Moreover, the Auslander-Reiten sequence ending in $I$ is given by 
$$0\ra I\ra R^2 \ra I\ra 0$$  
Set $M = R\ds I$ and let $E$ be its endomorphism ring.  

\vs  

An easy calculation shows that the Auslander-Reiten homomorphism $\Upsilon:  \ints \ra \ints^{\ds 2}$ is injective.  Now $\Xi$ is the subgroup of $E^*_\ab$ generated by the elements 
$$\wt{a1_I}\cdot \text{det}_E(\wt{a1_{R^2}})^{-1}\cdot\wt{a1_I} = \wt{a^21_I}\cdot \text{det}_E(\wt{a1_{R^2}})^{-1}\hspace{.2 cm}(a\in k^*)$$
The automorphism of $M$, $\wt{a^21_I}$, is given by $\text{diag}(1_R, a^21_I)$.  Using Lemma \ref{mult}, $\text{det}_E(\wt{a1_{R^2}})$ is the image of the automorphism $\text{diag}(a^21_R, 1_I)$ in $E^*_\ab$.  Thus $\Xi$ is the subgroup of $E^*_\ab$ generated by the elements  
$$\text{diag}(1_R, a^21_I)\cdot \text{diag}(a^{-2}1_R, 1_I) =  \text{diag}(a^{-2}1_R, a^21_I)$$
As groups, $\Xi\cong k^{*2} = \left\{a^2 : a\in k^*\right\}$.  Since $k$ is algebraically closed, $k^{*2} = k^*$.  Using (c) of Proposition \ref{mainab}, we have $E^*_\ab\cong k^*\ds R^*$, hence $E^*_\ab/\Xi\cong R^*$. Thus $G_1(R) \cong R^*$ by Theorem \ref{thmg1}, since $\Upsilon$ is injective.  
\end{proof}  

\subsection{Reduced Hypersurface Singularities in Dimension Three}\label{sss56}
 
Our aim here is to prove (b) of Proposition \ref{thmg2}.  We recall (b).  If  $S' = k[[x, y, u, v]]$, $R' = S'/(f+uv)S'$, where $f = f_1\cdots f_n$ with $f_i\in (x, y)\backslash (x, y)^2\subseteq S= k[[x, y]]$ are such that then $G_1(R') \cong \ints^{\ds (n-1)} \ds \left(R'^* \ds k[[w, z]]^{* \ds (n-1)}\right)/\Xi$, with $\Xi$ the subgroup from Theorem \ref{thmg1} and $w, z$ variables over $k$.

\begin{proof} 
If $U_i = (u, f_1\cdots f_i)$, then $L = U_1\ds\cdots\ds U_n$ is a $2$-cluster tilting object.  Then by (\cite{v2}, Proposition 7.28), the $2$-Auslander-Reiten matrix $T$ is zero.  By (e) of Proposition \ref{mainab}, $\aut_{R'}(L)_\ab \cong R'^* \ds (k[[w,z]])^{* \ds (n-1)}$ ($w$ and $z$ variables over $k$), thus Theorem \ref{thmg1} yields 
$$G_1(R') \cong \ints^{\ds (n-1)} \ds \left(R'^* \ds k[[w, z]]^{* \ds (n-1)}\right)/\Xi$$
Where $\Xi$ is the subgroup of $R'^* \ds k[[w, z]]^{*\ds (n-1)}$ of Definition \ref{defsha}. 

\end{proof} 


\section{Discussion}\label{discussion}

It is of interest to note that in the calculations of $G_1(R)$ for $R$ of positive dimension, either $G_1(R)\cong \overline{R}^*$ ($\overline{R}$ is the integral closure of $R$ in its total quotient ring), or $G_1(R)$ contains $\overline{R}^*$ a direct summand.  Our methods were ad hoc and tailored specifically to each singularity via the calculation of the group $\aut_R(L)_\ab $, so a deeper look into the relationship between $\Lambda = \End_R(L)^{\text{op}}$ and $\overline R$ could shed some light on the structure of $G_1(R)$ for hypersurface singularities.  

\vs

In fact, the key to the relationship seems to be understanding the relationship between the derived categories of $\Mod \End_R(L)^\text{op}$ and $\Mod \overline{R}$.  Indeed, in \cite{SD}, it is shown that if $A$ and $B$ are Noetherian (not necessarily commutative) rings whose derived categories are equivalent as triangulated categories, then there is an isomorphism $G_i(A)\cong G_i(B)$ for $i\geq 0$.  Of course, one should not expect an equivalence of the derived categories of $\Mod \End_R(L)^\text{op} $ and $\Mod\overline{R}$ since our examples (see Proposition \ref{mainab}) indicate for positive-dimensional rings that $G_1(\End_R(L)^\text{op}) \cong \aut_R(L)_\ab$ only contains $\overline{R}^*$ as a direct summand.  Moreover, it may also be too much to ask that $G_1(\overline{R})$ is a direct summand of $G_1(\End_R(L)^\text{op})$, as $G_1(\overline R)$ is not always isomorphic to $\overline{R}^*$.  However, if $R$ is a reduced one-dimensional local Noetherian ring, then $\overline{R} = \overline{R/\mfp_1}\times\cdots \times \overline{R/\mfp_s}$, where the $\mfp_j$ are the minimal primes of $R$ and each ring $\overline{R/\mfp_j}$ is a semilocal principal ideal domain.  In this situation
$$G_1(\overline{R})\cong G_1(\overline{R/\mfp_1})\times\cdots\times G_1(\overline{R/\mfp_s})$$
Now $\overline{R/\mfp_j}$ is semilocal and has finite global dimension, hence if $R$ is an algebra over a field $k$ with $\text{char}(k)\neq 2$, then Quillen's Resolution Theorem (\cite{Q}, \S Theorem  3), (\cite{VS}, Corollary 2.6 and Theorem  5.1), and (\cite{L}, Theorem  2) show there are isomorphisms 
$$G_1(\overline{R/\mfp_j})\cong K_1(\overline{R/\mfp_j}) \cong K^C_1(\overline{R/\mfp_j}) = (\overline{R/\mfp_j})^*$$
Thus $G_1(\overline{R})\cong \overline{R}^*$ in this case.  Nevertheless, we conjecture that if $R$ satisfies the hypotheses of Theorem \ref{thmg1} and has positive dimension, then $\aut_R(L)_\ab/\Xi \cong \overline{R}^*$ and hence $G_1(R)$ is isomorphic to the direct sum of the kernel of the $n$-Auslander-Reiten homomorphism and $\overline{R}^*$.  

\section*{Acknowledgments}
The author would like to thank Hailong Dao and Jeanne Duflot for their useful comments in the preparation of this manuscript.  We would also like to thank the anonymous referee for greatly improving the quality of this manuscript.

\end{document}